\documentclass[journal,twoside,web]{ieeecolor}

\usepackage{generic}
\usepackage{hyperref}
\hypersetup{hidelinks=true}
\usepackage{cite}
\usepackage{amsmath,amssymb,amsfonts}
\usepackage[linesnumbered, ruled,vlined]{algorithm2e}
\usepackage{graphicx}
\usepackage{textcomp}
\usepackage{enumerate}
\newtheorem{lemma}{Lemma}

\newtheorem{corollary}{Corollary}

\usepackage{booktabs}
\usepackage{multirow}
\usepackage{orcidlink}
\usepackage{mathtools}

\newcommand{\vct}[1]{\mathbf{#1}}

\def\BibTeX{{\rm B\kern-.05em{\sc i\kern-.025em b}\kern-.08em
    T\kern-.1667em\lower.7ex\hbox{E}\kern-.125emX}}
\begin{document}
\title{Data-Driven Static Output-Feedback Control for Multi-Agent Systems}
\author{Paul Serna-Torre \orcidlink{0000-0003-0578-7933}, Adam Sedlak \orcidlink{0000-0002-9487-3210}, and Patricia Hidalgo-Gonzalez \orcidlink{0000-0003-3635-8285}
\thanks{This paragraph of the first footnote will contain the date on 
which you submitted your paper for review. 
 }
\thanks{P.S. is supported by the Center for Energy Research and a scholarship from the Peru Ministry of Education. A.S is supported by Center for Energy Research and the National Science Foundation Graduate Research Fellowship Program under Grant No DGE-2545911. Any opinions, findings, and conclusions or recommendations expressed in this material are those of the authors and do not necessarily reflect the views of the National Science Foundation.}
\thanks{P.S, A.S. and P.H are with the Center for Energy Research and the Department of Mechanical and Aerospace Engineering at University of California, San Diego, California, USA.
CO 80523 USA (e-mail: \{psernatorre\}\{asedlak\}\{phidalgogonzalez\}@ucsd.edu).}
}

\maketitle

\begin{abstract}
This work proposes a data-driven framework that synthesizes output-feedback control for linear time-invariant multi-agent systems (MAS). We first develop two algorithms that use offline data samples of state and input trajectories to determine a Nash equilibrium (NE) for MAS. Next, we formulate a semidefinite program that, given the NE solution, computes the static output-feedback controllers for the agents in MAS. Compared to prior work, virtues of our framework include weaker controllability assumptions, the absence of online information and data-driven estimation of states. Next,  we use the proposed framework to design control schemes for voltage and frequency regulation for nonlinear multi-nodal electrical networks with grid-forming and grid-following inverters. Thus, compared to prior work that implements their data-driven frameworks in small-scale numerical examples, we validate the performance of our data-driven framework in high-fidelity nonlinear models of MAS. This implementation in realistic MAS provides insights that small-scale numerical examples may fail to capture. Results show successful grid stabilization and provision of regulation services from the inverters in the grid using our proposed framework. It can attain up to 78\% less overshoot and 30\% faster settling time than traditional strategies such as proportional-integral control.    
\end{abstract}

\begin{IEEEkeywords}
Data-driven control, differential games, linear systems, power systems dynamics, semidefinite programming. 
\end{IEEEkeywords}

\section{Introduction}
\label{sec:introduction}
\IEEEPARstart{D}{}ifferential games can serve to solve critical control problems in real-world multi-agent systems (MAS) \cite[Chapter 8]{Engwerda_LQbook}. For instance, in a multi-robot system, differential games can be used to synthesize a control strategy for agents to reach a goal while maintaining sufficient distance from one another\cite{fridovichkeil2020games}. In a power grid, inverters can be controlled using differential games to inject a requested power profile in benefit of the grid \cite{noncooperative_paulpatricia}. Although model-based frameworks are quite versatile, their practical use can be severely limited in real systems. For instance, network interactions between agents may contain dynamics and cannot be modeled as static graphs. Alternatively, the agents in the MAS can also be heterogeneous and complex, meaning they cannot be modeled by simple single-state models. Additionally, the access to the parameters of the agents and the network may not be accessible due to confidentiality. 

To overcome the mentioned challenges for model-based control, several works have proposed the design of control frameworks using data directly \cite{data_driven_tutorial}. This body of work builds upon a fundamental lemma proposed by \cite{willems_note_excitation}. This lemma illustrates that the data obtained from exciting an unknown model is rich enough to span the full behavior of the model if the input signal is persistently exciting \cite{alsalti_excitation}. 
Assuming persistency of excitation, the authors in \cite{dasilva_lqr_markov} use data samples of input and state trajectories to estimate Markov parameters and an extended observability matrix, which serve to synthesize a linear quadratic regulator (LQR) strategy. The work in \cite{depersis_lqr_datadriven} 
synthesizes the LQR controller through semidefinite programs (SDPs) that use offline data samples of state and input trajectories. The work in \cite{dai2023lqrnoise, depersis_robust_lqr} extends these SDPs to incorporate bounded noise measurements. In \cite{vanwaarde_suboptimal_lqr_data}, the authors propose a SDP that requires less data, although the resulting control strategies are suboptimal relative to the LQR strategy. More recently, the work in \cite{dai_output_datadriven, OLIVEIRA2025101260} proposes sufficient stabilization conditions that can be written as a SDP to ultimately synthesize a static output-feedback strategy using data directly.

Although the prior work in \cite{dasilva_lqr_markov, depersis_lqr_datadriven, depersis_robust_lqr, dai_output_datadriven, vanwaarde_suboptimal_lqr_data, dai2023lqrnoise, OLIVEIRA2025101260} lays an important foundation for data-driven control, these methods may not be readily applicable to MAS. In particular these frameworks assume the existence of a system-wide objective function. In contrast, in MAS, multiple objective functions co-exist. Moreover, each agent can be self-interested as it seeks to optimize its own objective function. \textcolor{black}{In efforts to bridge this gap, recently the authors in \cite{nortmann_nash_iterative, lopez2026databasednashequilibrialq} propose algorithms that determine the Nash equilibrium (NE) in LTI MAS using data directly.} These algorithms use offline data and synthesize control strategies for the agents to attain a NE. However, the mentioned works assume strategies under state-feedback structure, thus requiring full state information. In real world systems this requirement could be challenging to meet due to limited sensors and real-time communication infrastructure. 

Alternatively, several works have proposed data-driven frameworks that synthesize output-feedback strategies for MAS. The frameworks in \cite{kedi2025output, RIZVI2020output} propose using an observer to bypass the need for state feedback. That observer is embedded in model-free policy iteration schemes that compute the output-feedback control. The authors in \cite{ODEKUNLE2020108672, zhang2026output_qlearning} propose algorithms that use reinforcement learning and online data to ultimately find the Nash-seeking output-feedback strategies. Notwithstanding the mentioned developments, it is still an open question whether there exist a straight-forward approach to synthesize output-feedback strategies. For instance, an SDP, that off-the-shelf optimizers can solve, without needing online data samples and or iterative learning.

Furthermore, most of surveyed works proposing data-driven control synthesis, including the mentioned articles in \cite{depersis_lqr_datadriven, depersis_robust_lqr, dai_output_datadriven, vanwaarde_suboptimal_lqr_data, dai2023lqrnoise, OLIVEIRA2025101260, nortmann_nash_iterative, lopez2026databasednashequilibrialq, kedi2025output, RIZVI2020output, ODEKUNLE2020108672, zhang2026output_qlearning}, implement their frameworks in small-scale numerical examples. More specifically, these examples assume given numerical state-space models from which next data samples are extracted to test their frameworks. Yet, as highlighted by \cite{zeng2024noisesensitivity}, simplistic numerical examples may overlook real-world challenges, such as the impact of measurement noise on data-driven control methods. Additionally, real-world MAS are typically inherently non-linear. Thus, it is also necessary to experimentally validate the performance of a control framework, designed using its LTI MAS simplification, in high-fidelity non-linear MAS. Experimental validation in realistic settings enables us to better understand the barriers to real-world implementations of the proposed data-driven control framework. Moreover, such validation exercise can serve as an important step to guiding future research and ensure that data-driven control frameworks do not remain only theoretical in nature.

\section{Contributions}
The major contributions of this work are as follows:
\begin{enumerate}
    \item 
    We design two algorithms that determine the Nash equilibrium using offline data samples of state and input trajectories. Algorithm \ref{algo1}, although it can be similar to the work in \cite{nortmann_nash_iterative, lopez2026databasednashequilibrialq}, enables output-feedback control synthesis. We also introduce Algorithm \ref{algo2} which requires weaker controllability/stabilizability assumptions than both Algorithm \ref{algo1} and algorithms in prior work \cite{nortmann_nash_iterative, lopez2026databasednashequilibrialq}. 
    \item We propose a SDP that utilizes the NE solution from either Algorithm~\ref{algo1} or Algorithm~\ref{algo2} to directly synthesize the static output-feedback control strategies in LTI MAS. A virtue of this SDP is that it does not require either the knowledge of the input and state matrices nor any data sample. Additionally, it is not part of iterative schemes. Notice that, our proposed data-driven framework is comprised of a two-step process: first, the computation of the NE through Contribution (1), and second, the computation of the static output-feedback control through Contribution (2). Compared to prior work in data-driven output-feedback control for MAS, our framework avoids the use of state estimators, dynamic gains, and online data samples. 
    \item We assess \textcolor{black}{the scalability of our proposed framework in a realistic multi-agent networked system. To this end, we develop} control schemes for frequency and voltage regulation in \textcolor{black}{multi-nodal inverter-dominated power grids}. We implement these control schemes in electromagnetic transient (EMT) models, which capture the inherently non-linear dynamics of power systems. We showcase the performance of our framework against traditional control such as proportional and integral control. Prior work, such as  \cite{depersis_lqr_datadriven, depersis_robust_lqr, dai_output_datadriven, vanwaarde_suboptimal_lqr_data, dai2023lqrnoise, OLIVEIRA2025101260, nortmann_nash_iterative, lopez2026databasednashequilibrialq, kedi2025output, RIZVI2020output, ODEKUNLE2020108672, zhang2026output_qlearning}, tests their frameworks in small-scale numerical cases or does not implement their methods in \textcolor{black}{realistic multi-agent networked systems}. To the best of our knowledge, this work is the first of its kind in showcasing the effectiveness of a data-driven multi-agent output-feedback control and its implementation in the \textcolor{black}{EMT model of electrical power systems}.       
    \item We provide an \textcolor{black}{open-source} toolbox that implements the proposed Algorithms~\ref{algo1} and \ref{algo2} and SDP from Contributions (1) and (2). While existing software can synthesize data-driven single-agent control; extensions to MAS are not as readily available. Moreover, 
    \textcolor{black}{although several frameworks for data-driven control in MAS have been proposed, \cite{dasilva_lqr_markov, depersis_robust_lqr, dai_output_datadriven, vanwaarde_suboptimal_lqr_data, dai2023lqrnoise, OLIVEIRA2025101260, nortmann_nash_iterative, lopez2026databasednashequilibrialq, kedi2025output, RIZVI2020output, ODEKUNLE2020108672, zhang2026output_qlearning}, their computational implementations are not readily available or provided. We also aim to bridge such gap by implementing our proposed framework as a toolbox which allows other researchers and practitioners to readily leverage our framework.}
\end{enumerate}

\section{Problem Formulation}
\label{sec_problem_statement}


\subsection{Problem 1: Nash equilibrium solution}
\label{sec_problem1}
Consider a MAS that consists of a set of agents $\mathcal{C} =\{1, ..., |\mathcal{C}| \}$, where $|\mathcal{C}|$ denotes the cardinality of $\mathcal{C}$. The dynamics of the MAS are described by the LTI system:
    \begin{align}
        \dot{{\vct{{x}}}} & = A\vct{x} + \sum\nolimits_{i \in \mathcal{C}}B_i \vct{u}_i \, , \label{eq_mas_dyn1}
    \end{align} 
where $A \in \mathbb{R}^{n \times n}$, $B_i \in \mathbb{R}^{n \times m_i}$, $\vct{x} \in \mathbb{R}^n$ contains the states of the dynamics of all agents and couplings while $\vct{u} _i \in \mathbb{R}^{m_i}$ is the input of agent $i \in \mathcal{C}$. Each agent $i \in \mathcal{C}$ seeks to minimize:
\begin{equation} \label{eq_functional_player}
   f_i(\vct{x}_0, \vct{u}_1, ..., \vct{u}_{|\mathcal{C}|}) = \int_0^{\infty}\vct{x}^\top Q_i \vct{x} + \vct{u}^\top_i R_i \vct{u}_i \ dt \, ,
\end{equation}
where $Q_i \succeq 0$ and  $R_i \succ 0$ are matrices with appropriate dimensions, and $\vct{x}_0$ is the initial state. Note that \eqref{eq_functional_player} indicates that $f_i(\vct{x}_0, \vct{u}_1, ..., \vct{u}_{|\mathcal{C}|})$ is a function of all agents' strategies. This is because they affect the state vector $\vct{x}$ according to \eqref{eq_mas_dyn1}.  Consider $A$ and $B_i \ \forall i \in \mathcal{C}$ are unknown. The objective is to determine the set of static matrices $\mathcal{K} = \{K_1, ..., K_{|\mathcal{C}|} \}$ where $K_i \in \mathbb{R}^{m_i \times n}$ such that the state feedback strategy $\vct{u}_i = K_i \vct{x} \ \forall i \in \mathcal{C}$ attains a NE.

\subsection{Problem 2: Stabilizing output feedback strategies}
\label{sec_problem2}
Having determined the set of matrices $\mathcal{K}$ from Problem 1, consider that the output $\vct{y}_i \in \mathbb{R}^{p_i} \ \forall i \in \mathcal{C}$ is given by:
    \begin{align}
\vct{y}_{i} & = C_{i} \vct{x} + D_{i}\vct{d}_{i} \, , \label{eq_mas_dyn2}
    \end{align} 
where $C_i$ and $D_i$ are known matrices, and $\vct{d}_i \in L^{m_i}_{2[0, \infty)}$ is a finite-energy disturbance to the output of agent $i \in \mathcal{C}$. The objective is to determine the set of static matrices $\mathcal{F} = \{F_1, ..., F_{|\mathcal{C}|} \}$ where $F_i \in \mathbb{R}^{m_i \times p_i}$ such that if the agents use the output feedback strategy $\vct{u}_i = F_i \vct{y}_{i} \ \ \forall  i\in \mathcal{C}$, then the closed-loop system is stable, i.e., $\lambda(A+ \sum_{i \in \mathcal{C}} B_iF_iC_i) \in \mathbb{C}_{-}$, and 
the closed-loop system is dissipative with supply function $\vct{d}^\top R\vct{d}$, where $\vct{d}=[\vct{d}_1^\top \ ... \ \vct{d}_{|\mathcal{C}|}^\top]^\top$ and $R = \mathrm{diag}(R_1, ...., R_{|\mathcal{C}|})$. Moreover, using $\mathcal{F}$, the difference between $f^{\mathcal{F}}_i$ and $f^{\mathrm{Nash}}_i \ \forall i \in \mathcal{C}$ is minimized, where $f^{\mathcal{F}}_i \in \mathbb{R}_{\geq 0}$ is the cost function of  agent $i \in \mathcal{C}$ that results from using $\vct{u}_i = F_i \vct{y}_i \ \forall i \in \mathcal{C}$ in \eqref{eq_functional_player}, and $f^{\mathrm{Nash}} _i\in \mathbb{R}_{\geq0}$ is the cost function of agent $i \in \mathcal{C}$ that results from using $\mathcal{K}$ (NE strategies) from Problem 1. 

\section{Control Framework}
\subsection{Preliminaries}
We first review a data-driven approach to solve the LQR problem \cite{depersis_lqr_datadriven, depersis_robust_lqr, dai_output_datadriven}, and then propose Corollary \ref{cor_recover_p} to determine the solution of an algebraic Riccati equation (ARE). In the LQR, there is a single agent $\mathcal{C}=\{ 1 \}$, that seeks to minimize:
\begin{equation} \label{eq_f_one_player}
   f(\vct{x}_0, \vct{u}) = \int_0^{\infty}\vct{x}^\top Q \vct{x} + \vct{u}^\top R \vct{u} \ dt \, ,
\end{equation}
by using a state feedback strategy $\vct{u} = K \vct{x}$ subject to:
\begin{align}
    \vct{\dot{x}} = A\vct{x} + B\vct{u} \, . \label{eq_oneplayer_system}
\end{align}
Solving the LQR we obtain $K = R^{-1} B^\top P$, $K \in \mathbb{R}^{m \times n}$, where $P$ is the solution to the ARE:
\begin{align}
    A^\top P + P A - P B R^{-1}B^\top P + Q = 0_{n \times n} \, . \label{eq_are}
\end{align}
However, if $A$ and $B$ are unknown, \eqref{eq_are} cannot be solved directly. To circumvent this, a data-driven approach is proposed in \cite{depersis_lqr_datadriven} to determine $K$. First, the system \eqref{eq_oneplayer_system} is excited via $\vct{u}$ with a persistently exciting signal \cite{willems_persist}. Data samples obtained from the input and state trajectories are used to construct:
\begin{align}
    U  & = \begin{bmatrix}
        \vct{u}(0) & \vct{u}(\Delta) & \vct{u}(2\Delta) & ... & \vct{u}(t_{end})
    \end{bmatrix} \in \mathbb{R}^{m \times l} \, ,  \label{eq_U_lqr}\\
    X  & = \begin{bmatrix}
        \vct{x}(0) & \vct{x}(\Delta) & \vct{x}(2\Delta) & ... & \vct{x}(t_{end})
    \end{bmatrix} \in \mathbb{R}^{n \times l } \, ,\label{eq_X_lqr}
\end{align}
where $l = \frac{t_{end}}{\Delta} +1 $, $\Delta \in \mathbb{R}_{>0}$ is the sampling time, and $t_{end} \in \mathbb{R}_{>0}$ is the end time of the experiment. Using the data in $X$, the following matrix can be constructed using finite differences or other numerical approximations:
\begin{align} 
    \widetilde{X}  = \begin{bmatrix}
        \vct{\dot{x}}(0) & \vct{\dot{x}}(\Delta) & \vct{\dot{x}}(2\Delta) & ... & \vct{\dot{x}}(l-1)
    \end{bmatrix} \in \mathbb{R}^{n \times l } \, .\label{eq_dX_lqr}
\end{align}
Finally, $K$ can be determined using $K = UZ W^{-1}$, where $Z$ and $W$ are a solution to the following SDP \cite{depersis_lqr_datadriven}:
 \begin{align}
       & \min_{\substack{W \in \mathbb{R}^{n \times n}, T \in \mathbb{R}^{m \times m}, Z \in \mathbb{R}^{l \times n}}} \mathrm{trace}(QW) +  \mathrm{trace}(T) \,, \label{eq_lqr_sdp_obj} 
       \end{align}
subject to:
\begin{gather}
    \widetilde{X} Z + Z^\top \widetilde{X}^\top + I_{n} \preceq 0,  \label{eq_lqr_sdp_ly} \\
    \begin{bmatrix}
        T & R^{{1}/{2}} U Z\\
        (R^{{1}/{2}} U Z)^\top & W 
        \end{bmatrix} \succeq 0, \label{eq_lqr_sdp_tr}  \\
        X Z = W. \label{eq_lqr_sdp_t} 
\end{gather}
Using the solution $W$, $Z$, and $T$, we propose the following corollary to determine the solution to the ARE \eqref{eq_are} when $A$ and $B$ are unknown.
 \begin{corollary}
\label{cor_recover_p}
Given $M = \widetilde{X} Z W^{-1}$ and $N = -Q -K^\top R K$, the unique solution $P$ to the ARE \eqref{eq_are} can be obtained by solving the following Sylvester equation for $P$:
\begin{align}
    \label{eq_syl_p}
    M^\top P +  P M = N \, .  
    \end{align}
\end{corollary}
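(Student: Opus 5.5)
The argument is to show that $M$ equals the closed-loop matrix $A - BK$ (with the sign convention $\vct{u} = -K\vct{x}$ implicit in $K = R^{-1}B^\top P$), and then to rewrite the ARE \eqref{eq_are} as a Lyapunov equation in that closed-loop matrix.

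\textbf{Step 1: identify $M$.} The data satisfy $\widetilde{X} = AX + BU$ exactly under the (idealized) assumption that the derivative approximation is exact. Using \eqref{eq_lqr_sdp_t}, we get
\begin{align*}
M = \widetilde{X} Z W^{-1} = (AX + BU) Z W^{-1} = A\, XZW^{-1} + B\, UZW^{-1} = A + BK_{\mathrm{d}},
\end{align*}
where $K_{\mathrm{d}} = UZW^{-1}$ is the data-driven gain. We then invoke the optimality result of \cite{depersis_lqr_datadriven}: at the optimum of \eqref{eq_lqr_sdp_obj}--\eqref{eq_lqr_sdp_t}, $K_{\mathrm{d}}$ coincides with the LQR gain. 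With the sign convention fixed, $K_{\mathrm{d}} = -R^{-1}B^\top P$, so $M = A - BR^{-1}B^\top P$. Constraint \eqref{eq_lqr_sdp_ly} reads $MW + WM^\top \preceq -I_n$ with $W \succ 0$, which shows directly that $M$ is Hurwitz.

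\textbf{Step 2: rewrite the ARE.} With $K = R^{-1}B^\top P$ and $M = A - BK$, expand
\begin{align*}
M^\top P + PM = A^\top P + PA - 2PBR^{-1}B^\top P .
\end{align*}
Substituting \eqref{eq_are}, i.e.\ $A^\top P + PA = PBR^{-1}B^\top P - Q$, gives
\begin{align*}
M^\top P + PM = -Q - PBR^{-1}B^\top P = -Q - K^\top R K = N .
\end{align*}
Hence the stabilizing ARE solution $P$ satisfies \eqref{eq_syl_p}.

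\textbf{Step 3: uniqueness.} Because $M$ is Hurwitz, $\lambda_i(M) + \lambda_j(M) \neq 0$ for all eigenvalue pairs. The Sylvester operator $P \mapsto M^\top P + PM$ is therefore invertible, so \eqref{eq_syl_p} has a unique solution. That solution must be the ARE solution found in Step 2, and it is given explicitly by $P = \int_0^\infty e^{M^\top t}(-N)e^{Mt}\,dt$.

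\textbf{Main obstacle.} The delicate step is Step 1. It relies on exactness of $\widetilde{X} = AX + BU$ (noise-free data and exact derivatives) and on the SDP producing exactly the optimal LQR gain. Note that $N$ is built from $K$ itself, so any error in $K$ propagates into $N$. I would cite the theorem of \cite{depersis_lqr_datadriven}, which states that the SDP's optimal $UZW^{-1}$ equals the LQR gain under persistency of excitation (rank $[U^\top\ X^\top]^\top = n+m$). I would also state the sign convention explicitly, so that the identity $M = A - BR^{-1}B^\top P$ holds consistently.
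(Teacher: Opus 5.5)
Your proof is correct and follows essentially the same route as the paper: you identify $M=\widetilde{X}ZW^{-1}$ as the LQR closed-loop matrix via \cite{depersis_lqr_datadriven}, substitute it into the ARE to obtain the Lyapunov/Sylvester equation with right-hand side $-Q-K^\top RK$, and deduce uniqueness from $M$ being Hurwitz. Your refinements are minor and do not change the argument. You fix the sign of $K$ consistently; the paper writes $K=R^{-1}B^\top P$ in the preliminaries but $K=-R^{-1}B^\top P$ in the proof, which is harmless because $N$ is quadratic in $K$. You also derive Hurwitzness directly from constraint \eqref{eq_lqr_sdp_ly}, rather than from stability of the LQR closed loop.
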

\begin{proof}
By \cite[Th. 2]{depersis_lqr_datadriven}, the closed-loop state matrix satisfies: 
\begin{align}
     A + B K = \widetilde{X} Z W^{-1} \, , \label{eq_abk_lqr}
\end{align}
where $\widetilde{X}$ is given in \eqref{eq_dX_lqr}, $W$ and $Z$ satisfy the SDP \eqref{eq_lqr_sdp_obj}-\eqref{eq_lqr_sdp_t}. Furthermore, using $K = -R^{-1} B^\top P$ and \eqref{eq_abk_lqr} in \eqref{eq_are} results
\begin{align}
    (\widetilde{X} Z W^{-1})^\top P + P (\widetilde{X} Z W^{-1}) = -Q - K^\top R K \, , \label{eq_are_re}
\end{align}
 which is equation \eqref{eq_syl_p}. By \cite[Lem. 2.7]{zhou_control}, the Sylvester equation \eqref{eq_are_re} has unique solution $P$ if and only if $\lambda(\widetilde{X} Z W^{-1}) \cap \lambda(-\widetilde{X} Z W^{-1}) = \emptyset$. This is true because $\lambda(A+BK) \in \mathbb{C}_{-}$, which by \eqref{eq_abk_lqr}, implies $\lambda(\widetilde{X} Z W^{-1}) \in \mathbb{C}_{-}$.
\end{proof}

\subsection{Nash equilibrium}
\label{sec_ne}
The NE concept can be used to solve a non-cooperative linear quadratic differential game \cite{lineargames_engwerda, thulasi_constructive_nash}. In these games, there is a set of agents $\mathcal{C}$, and each agent $i$ uses a strategy $\vct{u}_i = K_i\vct{x}$ to minimize its individual cost function \eqref{eq_functional_player} subject to \eqref{eq_mas_dyn1}. To find a NE, it requires solving for a set of matrices $\mathcal{P} = \{P_1, ..., P_{|\mathcal{C}|} \}$ that satisfies a series of algebraic Riccati equations (CAREs) such that $\mathcal{R}_{i}(\mathcal{P}) = 0_{n \times n} \ \forall i \in \mathcal{C}$, where:
\begin{align}
& \mathcal{R}_{i}(\mathcal{P}) = \Big(
{A}- \sum_{j \in \mathcal{C}_{-i} } B_jR_j^{-1}B_j^\top P_{j} \Big)^\top P_{i} \notag \\ 
&+ P_{i} \Big({A} - \sum_{j \in \mathcal{C}_{-i} }   B_jR_j^{-1}B_j^\top P_{j} \Big) - P_{i} B_iR_i^{-1}B_i^\top P_{i} + Q_{i} \, , \label{eq_coupled_ricky}
\end{align}
where $\mathcal{C}_{-i}$ denotes the set of agents $\mathcal{C}$ excluding the agent $i$. Next, the set of matrices $\mathcal{K}$ is determined using 
$K_i = R_i^{-1}B^\top_i P_i \ \forall i \in \mathcal{C}$. Using $\vct{u}_i = K_i \vct{x} \ \forall i \in \mathcal{C}$, the MAS \eqref{eq_mas_dyn1} attains a NE. However, as $A$ and $B_i \ \forall i \in \mathcal{C}$ are unknown, model-based methods in \cite{capello_nash_approx, Engwerda2007} cannot be used to solve the CAREs. In turn, we propose Algorithm \ref{algo1} to determine the sets $\mathcal{K}$ and $\mathcal{P}$ using data. 
\SetKwInput{KwInput}{Input}
\SetKwFor{ParFor}{paralell for}{do}{endfor} 
\begin{algorithm}
\KwInput{$Q_i \succeq 0, R_i \succ 0 \ \forall i \in \mathcal{C}$, $\Delta \in \mathbb{R}_{>0}$, $\epsilon_{{CARE}} \in \mathbb{R}_{>0}$, ${r}_{max} \in \mathbb{N}$}
\KwResult{$\mathcal{K} = \{K_1, ..., K_{|\mathcal{C}|} \}$ and $\mathcal{P} = \{P_1, ..., P_{|\mathcal{C}|} \}$.}
\For{$r=1$ to $r_{{max}}$}{
    \ParFor{$i=1$ to $|\mathcal{C}|$}{
    \If{$r=1$}{
    Excite the MAS \eqref{eq_mas_dyn1} with $\vct{u}_{j} = 0 \ \forall j \in \mathcal{C}_{-i}$ and a persistently exciting $\vct{u}_{i}$. \\
    }
    \Else{Compute $\vct{u}_{j}$ using $\vct{x}$ in $\vct{u}_{j} = K^{(r-1)}_j \vct{x} \ \forall j \in \mathcal{C}_{-i}$ in the MAS \eqref{eq_mas_dyn1}. \\
    Excite the closed-loop MAS with a persistently exciting $\vct{u}_{i}$.}
    Construct $U^{(r)}_i$ and $X^{(r)}$ using \eqref{eq_U_lqr}-\eqref{eq_X_lqr}. \\
    Compute $\widetilde{X}^{(r)}$ using \eqref{eq_dX_lqr}. \\
    Compute $W^{(r)}_i$, $T^{(r)}_i$ and $Z^{(r)}_i$ by solving \eqref{eq_lqr_sdp_obj}-\eqref{eq_lqr_sdp_t}. \\ 
    Compute $K^{(r)}_i = U^{(r)}_i Z^{(r)}_i (W^{(r)}_i){}^{-1}$. \\
    Compute $P^{(r)}_i$ using $\widetilde{X}^{(r)}, Z^{(r)}_i, W^{(r)}_i, Q_i, R_i$, and $K^{(r)}_i$ in \eqref{eq_syl_p}.\\
    Compute $\epsilon^{(r)}_i = \| K^{(r)}_{i} - K^{(r-1)}_{i} \|$. \\
    }
    \If{$r>1$ {and} $\max({\epsilon_1, ..., \epsilon_{|\mathcal{C}|}}) \leq \epsilon_{CARE}$}{
        Set $\mathcal{K} = \{K^{(r)}_1, ..., K^{(r)}_{|\mathcal{C}|} \}$. \\
        Set $\mathcal{P} = \{P^{(r)}_1, ..., P^{(r)}_{|\mathcal{C}|} \}$. \\
        Stop Algorithm.}        
    }
\caption{Nash seeking iterative LQR method}
\label{algo1}
\end{algorithm}
The input parameters to Algorithm \ref{algo1} are $Q_i$ and $R_i$ for each agent $i$, a sampling time $\Delta$, a tolerance $\epsilon_{{CARE}}$, and a maximum number of iterations ${r}_{max}$. 

The first iteration ($r=1$) for agent $i$ consists of exciting the MAS \eqref{eq_mas_dyn1} with a persistently exciting signal via $\vct{u}_i$. The inputs $\vct{u}_j \ \forall j \in \mathcal{C}_{-i}$ are fixed to zero. From this experiment, we construct the matrices $U^{(r)}_i$, $X^{(r)}_i$ and $\widetilde{X}^{(r)}_i$, which are used as inputs to the SDP \eqref{eq_lqr_sdp_obj}-\eqref{eq_lqr_sdp_t}, to determine $W_i$, $T_i$, $Z_i$, and then $K_i$. Next, Corollary \ref{cor_recover_p} is used to determine $P_i$. Once $K^{(r)}_i$ is determined for all the agents, $K^{(r)}$ is formed by vertically concatenating all $K^{(r)}_i$. 

In the next iterations (i.e., $r>1$) for agent $i \in \mathcal{C}$, the MAS must be connected in closed-loop in which all inputs except $\vct{u}_i$ use the state feedback strategy computed in the previous iteration.  Then, the MAS is excited via $\vct{u}_i$ with a persistently exciting signal. The subsequent steps are same as the iteration $r=1$. All this procedure can be run in a parallel for-loop for all the agents. If $\epsilon_{i} \leq \epsilon_{CARE} \ \ \forall i \in \mathcal{C}$ and assuming that $\epsilon_{CARE}$ is a small number such as $10^{-3}$, then the solution to the CAREs is given by the sets $\mathcal{P}$ and $\mathcal{K}$ of the last iteration.
The next corollary proves that the resulting set $\mathcal{P}$ of Algorithm \ref{algo1} satisfies the set of CAREs. 
\begin{corollary}
    If $\lim_{r \to \infty} \epsilon_i^{(r)} = 0 \ \forall i \in \mathcal{C}$, $\mathcal{P}^{(\infty)}$ solves $\mathcal{R}_{i}(\mathcal{P}) = 0_{n \times n} \ \forall i \in \mathcal{C}$, where $\mathcal{R}_{i}$ is given by \eqref{eq_coupled_ricky}.
\end{corollary}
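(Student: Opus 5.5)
The proof interprets iteration $r$ of Algorithm~\ref{algo1} for agent $i$ as the solution of a single-agent LQR problem. In that problem the "plant" is the MAS \eqref{eq_mas_dyn1} closed under the other agents' previous gains, namely
\begin{align*}
A_i^{(r)} := A + \sum\nolimits_{j \in \mathcal{C}_{-i}} B_j K_j^{(r-1)}, \qquad B := B_i,
\end{align*}
with $K_j^{(0)} = 0$ for $r=1$, and the cost is $(Q_i,R_i)$. The data $U_i^{(r)}, X^{(r)}, \widetilde{X}^{(r)}$ are generated by exactly this system excited through $\vct{u}_i$.

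First, we apply \cite[Th. 2]{depersis_lqr_datadriven}, as in the proof of Corollary~\ref{cor_recover_p}. This gives that $K_i^{(r)}$ is the LQR gain of $(A_i^{(r)}, B_i, Q_i, R_i)$ and that
\begin{align*}
\widetilde{X}^{(r)} Z_i^{(r)} (W_i^{(r)})^{-1} = A_i^{(r)} + B_i K_i^{(r)}.
\end{align*}
Corollary~\ref{cor_recover_p} then says that $P_i^{(r)}$ is the unique stabilizing solution of the ARE
\begin{align*}
(A_i^{(r)})^\top P_i^{(r)} + P_i^{(r)} A_i^{(r)} - P_i^{(r)} B_i R_i^{-1} B_i^\top P_i^{(r)} + Q_i = 0,
\end{align*}
with $K_i^{(r)} = -R_i^{-1} B_i^\top P_i^{(r)}$, consistent with the sign convention used in the proof of Corollary~\ref{cor_recover_p}. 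Substituting $K_j^{(r-1)} = -R_j^{-1}B_j^\top P_j^{(r-1)}$ into $A_i^{(r)}$ shows that this equation is precisely $\mathcal{R}_i(\mathcal{P}^{(r,i)}) = 0$. Here $\mathcal{P}^{(r,i)}$ is the mixed set that uses $P_i^{(r)}$ in the $i$-th slot and $P_j^{(r-1)}$ in the others.

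Second, we pass to the limit. The hypothesis $\epsilon_i^{(r)} \to 0$ means $K_i^{(r)} - K_i^{(r-1)} \to 0$; we read it, as intended, as convergence $K_i^{(r)} \to K_i^{(\infty)}$ for every $i$. Then $A_i^{(r)} + B_iK_i^{(r)} \to A + \sum_{j} B_j K_j^{(\infty)} =: A_{cl}$, and for each $r$ the matrix $P_i^{(r)}$ solves the Lyapunov/Sylvester equation
\begin{align*}
(A_i^{(r)} + B_iK_i^{(r)})^\top P_i^{(r)} + P_i^{(r)}(A_i^{(r)} + B_iK_i^{(r)}) = -Q_i - (K_i^{(r)})^\top R_i K_i^{(r)}.
\end{align*}
Provided $A_{cl}$ is Hurwitz, the solution of this linear equation depends continuously on its coefficients (the operator $P \mapsto M^\top P + PM$ is invertible by \cite[Lem. 2.7]{zhou_control}). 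Hence $P_i^{(r)} \to P_i^{(\infty)}$, which solves the limiting Lyapunov equation. Equivalently, one can pass to the limit directly in the ARE form above, using $P_j^{(r-1)} \to P_j^{(\infty)}$ since all the maps involved are continuous. This gives $\mathcal{R}_i(\mathcal{P}^{(\infty)}) = 0$ for every $i$, because the mixed set $\mathcal{P}^{(r,i)}$ converges to $\mathcal{P}^{(\infty)}$.

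The main obstacle is the limiting step. We must ensure that convergence of the gains transfers to convergence of the $P_i^{(r)}$, which requires invertibility of the limiting Sylvester operator, i.e., that $A_{cl}$ has no eigenvalues symmetric about the imaginary axis. The first approach is to note that each $A_i^{(r)} + B_iK_i^{(r)}$ is Hurwitz and that the limit is at least marginally stable. To exclude the boundary case, we would argue that $\mathcal{R}_i(\mathcal{P}^{(\infty)}) = 0$ can be obtained without inverting the operator. Specifically, write $B_i^\top P_i^{(r)} = -R_i K_i^{(r)}$, which converges, and take limits of the ARE after multiplying appropriately. If that fails, we would impose, as implicitly assumed by the algorithm's stopping rule, that the limiting closed loop is stabilizing, which is the standard requirement for a feedback NE.
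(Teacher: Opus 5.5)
Your proposal is correct and follows essentially the same route as the paper: treat iteration $r$ for agent $i$ as an LQR problem for $A+\sum_{j\in\mathcal{C}_{-i}}B_jK_j^{(r-1)}$, use Corollary~\ref{cor_recover_p} to obtain the per-iteration ARE (which is $\mathcal{R}_i$ evaluated at the mixed set of iterates), and pass to the limit to get $\mathcal{R}_i(\mathcal{P}^{(\infty)})=0$. Your handling of the limit is more careful than the paper's, which simply takes the existence of $P_i^{(\infty)}$ for granted. Since the statement itself presupposes $\mathcal{P}^{(\infty)}$, passing to the limit directly in the ARE is enough. The ``multiply appropriately'' fallback is therefore unnecessary, and it could not establish convergence of $P_i^{(r)}$ on its own anyway: $K_i^{(r)}$ can converge while $P_i^{(r)}$ diverges along modes that $B_i$ does not reach.
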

\begin{proof}
    At iteration $r$, $K^{(r-1)}_{i} \ \forall i \in  \mathcal{C}$ has been determined, and the closed-loop state matrix is $A^{(r-1)}_{i} = {A}+ \sum_{j \in \mathcal{C}_{-i} } B_jK^{(r-1)}_j$. Then, the SDP \eqref{eq_lqr_sdp_obj}-\eqref{eq_lqr_sdp_t} yields $K^{(r)}_{i} = R_i^{-1}B_i^\top P^{(r)}_{i}$. By Corollary \ref{cor_recover_p}, $K^{(r)}_{i}$ satisfies \eqref{eq_are} where $A = A^{(r-1)}_i$, $B = B_i$, $R = R_i$, and $Q = Q_i$.    
    As $\lim_{r \to \infty} \epsilon_i^{(r)} = 0$, $P^{(\infty)}_{i} = P_i$ and $K^{(\infty)}_{i} = K_i \ \forall i \in \mathcal{C}$. Using these two equalities and taking $r \to \infty$, \eqref{eq_are} with $A = A^{(\infty)}_i$, $B = B_i$, $R = R_i$, and $Q = Q_i \ \forall i \in \mathcal{C}$ becomes the set of CAREs.
\end{proof}

It worth mentioning that \cite[Algo. 2]{nortmann_nash_iterative} meticulously develops a similar scheme as in Algorithm \ref{algo1}. However, our Algorithm \ref{algo1} incorporates two novel features: equation \eqref{eq_syl_p} of Corollary \ref{cor_recover_p} and a parallel for-loop. While the former feature enables the determination of output feedback strategies as Section \ref{sec_dd_output_feedback} will show, the later feature improves the convergence speed of the Algorithm \ref{algo1} as Section \ref{sec_case_studies} illustrates. 

We also notice that although Algorithm \ref{algo1} has the benefit of not relying on explicitly identifying the matrices $A$ and $B_i \ \forall i \in \mathcal{C}$, it has the drawback that it assumes that $\begin{bmatrix}
    A & B_i
\end{bmatrix}$ is stabilizable for every $i \in \mathcal{C}$. This assumption is indeed stronger than having $\begin{bmatrix}
    A & B_1 & ... & B_{|\mathcal{C}|}
\end{bmatrix}$ stabilizable, which is needed to ensure the existence of a NE  \cite{capello_nash_approx, Engwerda2007}. As the assumption of having $\begin{bmatrix}
    A & B_i
\end{bmatrix}$ stabilizable for every $i \in \mathcal{C}$ can be difficult to meet in some MAS, we develop Algorithm \ref{algo2}, which does not require this assumption. 

\begin{algorithm}
\KwInput{$Q_i \succeq 0, R_i \succ 0 \ \forall i \in \mathcal{C}$, $\Delta \in \mathbb{R}_{>0}$ $\epsilon_{\mathrm{CARE}} \in \mathbb{R}_{>0}$, ${r}_{max} \in \mathbb{N}$}
\KwResult{$\mathcal{K}^{} = \{K^{}_1, ..., K^{}_{|\mathcal{C}|} \}$ and $\mathcal{P}^{} = \{P^{}_1, ..., P^{}_{|\mathcal{C}|} \}$.}
Define $Q = \frac{1}{|\mathcal{C}|}\sum_{i \in \mathcal{C}} Q_i$ and $R = \mathrm{diag}(R_1, ..., R_2)$.\\
Excite the MAS \eqref{eq_mas_dyn1} with persistently exciting signals via $\vct{u}^{}_1 , \ ... \ , \vct{u}^{}_{|\mathcal{C}|}$. \\
Construct $U^{(r=1)}_i$ and $X^{(r=1)}$ using \eqref{eq_U_lqr}-\eqref{eq_X_lqr}. \\
Compute $\widetilde{X}^{(r=1)}$ using \eqref{eq_dX_lqr}. \\
Compute $B^{est}_{i}$ using \eqref{eq_b_est}. \\
Compute $S_i = B^{est}_{i} R^{-1}_{i} B^{est\top}_{i} \ \forall i \in \mathcal{C}$. \\
Compute $W^{(r=1)}$, $T^{(r=1)}$ and $Z^{(r=1)}$ by solving \eqref{eq_lqr_sdp_obj}-\eqref{eq_lqr_sdp_t}. \\ 
Compute $K^{(r=1)} = U^{(r=1)} Z^{(r=1)} (W^{(r=1)}){}^{-1}$. \\
Compute $P^{(r=1)}$ using $\widetilde{X}^{(r=1)}, Z^{(r=1)}, W^{(r=1)}, Q, R$, and $K^{(r=1)}$ in \eqref{eq_syl_p}.\\
Compute $A_c^{(r=1)} = \widetilde{X}^{(r=1)} Z^{(r=1)} (W^{(r=1)}){}^{-1}$. \\
Set $P_i^{(r=1)} =  P^{(r=1)} \ \forall i \in \mathcal{C}$.\\
\For{$r=2$ to $r_{max}$}{
    Compute $\alpha^{(r)}, {P}^{(r)}_1, ..., P^{(r)}_{|\mathcal{C}|}$ by solving \eqref{eq_nw_sdp_obj}-\eqref{eq_nw_p}. \\
    Compute ${K}^{(r)}_i = R^{-1}_i B^{est}_i {}^\top P^{(r)}_i \ \forall i \in \mathcal{C}$. \\
    Compute $A^{(r)}_c$ by solving \eqref{eq_invlqr_obj}-\eqref{eq_invlqr_w}. \\
    Compute $\epsilon^{(r)}_i = \| P^{(r)}_{i} - P^{(r-1)}_{i} \| \ \forall i \in \mathcal{C}$ . \\
    \If{$\max({\epsilon_1, ..., \epsilon_{|\mathcal{C}|}}) \leq \epsilon_{CARE}$}{
        Set $\mathcal{K} = \{K^{(r)}_1, ..., K^{(r)}_{|\mathcal{C}|} \}$. \\
        Set $\mathcal{P} = \{P^{(r)}_1, ..., P^{(r)}_{|\mathcal{C}|} \}$. \\
        Stop Algorithm.}}
\caption{Nash seeking Newton-Raphson method}
\label{algo2}
\end{algorithm}
 
Inspired by \cite{Engwerda2007}, in Algorithm \ref{algo2}, we solve $\mathcal{R}_{i}(\mathcal{P}^{(r)}) = 0_{n \times n} \ \forall i \in \mathcal{C}$ at each iteration $r$. By replacing $\mathcal{P}^{(r)} = \{ P^{(r-1)}_{1} + \delta^{(r-1)}_1, ..., P^{(r-1)}_{|\mathcal{C}|} + \delta^{(r-1)}_{|\mathcal{C}|} \}$  in $\mathcal{R}_{i}(\mathcal{P}^{(r)}) = 0_{n \times n} \ \forall i \in \mathcal{C}$ and using $\delta^{(r)}_i = P^{(r)}_i - P^{(r-1)}_i$ results in:
\begin{align}
    A^{(r-1)}_c {}^{\top} P^{(r)}_{i} + P^{(r)}_{i}A^{(r-1)}_c  + \sum\limits_{j \in \mathcal{C}_{-i}} N^{(r)}_j + N^{(r)}_j{}^\top \notag \\ 
      + Q_i + P_i^{(r)} S_i P_i^{(r)}= 0_{n \times n} \, ,\label{eq_nw_approach}
\end{align}
$\forall i \in \mathcal{C}$, where $A^{(r-1)}_c = A + \sum_{j \in \mathcal{C}}S_jP^{(r-1)}_j$, $S_j = B_j R^{-1}_jB_{j}^\top \ \forall j \in \mathcal{C}$, and  $N^{(r)}_j =P_j^{(r-1)}S_{j} P^{(r-1)}_i - P^{(r)}_j S_j P^{(r-1)}_i $. Note that $P^{(r)}_i \ \forall i \in \mathcal{C}$ is an unknown variable, whereas $P^{(r-1)}_i \ \forall i \in \mathcal{C}$ is known as it results from the previous iteration $r-1$. We define the left-hand side of \eqref{eq_nw_approach} as ${T}^{(r)}_{i}$ and propose the next SDP:
\begin{align}
    & \min_{\substack{\alpha \in \mathbb{R}_{\geq 0}, {P}^{(r)}_1, ..., P^{(r)}_{|\mathcal{C}|} \in \mathbb{R}^{n \times n}}}  \alpha \, , \label{eq_nw_sdp_obj} 
    \end{align}
    subject to: 
    \begin{gather}
        | e_k^\top T^{(r)}_i e_q | \leq \alpha, \ \forall \ k \in \mathbb{N}_{\le q}\, , \ q  \in \mathbb{N}_{\le n},  \ i \in \mathcal{C} \, , \label{eq_abs_T} \\
        P^{(r)}_{i} \succeq 0 \, , \ \forall i \in \mathcal{C} \label{eq_nw_p}
    \end{gather}
where $e_k = \begin{bmatrix}
    0 & \dots & 1 & \dots & 0
\end{bmatrix}^\top \in \mathbb{R}^n$ and its $k$-th element is $1$. This SDP aims to make every entry of $T^{(r)}_i$ equal to zero so that \eqref{eq_nw_approach} holds. As it is shown in Algorithm \ref{algo2}, this SDP becomes a fundamental part to find a NE.

Algorithm \ref{algo2} begins with exciting the MAS \eqref{eq_mas_dyn1} with persistently exciting signals via all input channels. By \cite[Th. 1]{willems_note_excitation}, the input matrix for each agent $B^{est}_i$ is estimated using:
\begin{align}
\begin{bmatrix}
    B^{est}_1 & ... &  B^{est}_{|\mathcal{C}|}
\end{bmatrix}  = \begin{bmatrix}
    \widetilde{X}^{(r)} \begin{bmatrix}
        U^{(r)} \\ X^{(r)}
    \end{bmatrix}
\end{bmatrix}_{(:, 1:m)} \, . \label{eq_b_est}
\end{align}
Lines 9-11 solve the LQR and use Corollary \ref{cor_recover_p}. Then, the closed-loop state matrix $A^{(r)}_c$ is computed using \eqref{eq_abk_lqr}. Next, $P^{(r)}_i \ \forall i \in \mathcal{C}$ is set to be equal to the solution $P^{(r)}$ determined in line 11. In the next iterations, the SDP \eqref{eq_nw_sdp_obj}-\eqref{eq_nw_p} uses $P^{(r-1)}_i \ \forall i \in \mathcal{C}$ and $A^{(r-1)}_c$ as input data to compute $\alpha^{(r)}$, $P^{(r)}_i \ \forall i \in \mathcal{C}$. Next, we compute $K^{(r)}_i \ \forall i \in \mathcal{C}$ using $B^{est}_i$, which has been computed once in the first iteration for all the agents. Since the SDP \eqref{eq_lqr_sdp_obj}-\eqref{eq_lqr_sdp_t} is not used for iteration $r>1$, then \eqref{eq_abk_lqr} cannot be used directly to calculate $A_c^{(r)}$. In turn, we propose the SDP in Corollary \ref{coro_3} to determine  $A_c^{(r)}$.
\begin{corollary} \label{coro_3}
Given ${U}^{(r=1)}$, ${X}^{(r=1)}$, $\widetilde{X}^{(r=1)}$, define:
\begin{align}
    K^{(r)} = \begin{bmatrix}
        K_1^{(r) \ \top} & ... & K_{|\mathcal{C}|}^{(r) \ \top}
    \end{bmatrix}^\top \, . \label{k_total}
\end{align}
The closed-loop state matrix $A^{(r)}_c$ can be computed as $A^{(r)}_c = \widetilde{X}^{(r=1)} Z^{(r)} (W^{(r)}) {}^{-1}$, where $Z^{(r)}$ and $W^{(r)}$ are the solution to:
    \begin{align}
       & \min_{\substack{W^{(r)} \in \mathbb{R}^{n \times n}, Z^{(r)} \in \mathbb{R}^{l \times n}}} \mathrm{trace}(W^{(r)}) \label{eq_invlqr_obj} 
\end{align}
subject to:
\begin{gather}
     \widetilde{X}^{(r=1)} Z^{(r)} + Z^\top \widetilde{X}^{(r=1)} {}^\top + I_{n} \preceq 0 \, , \label{eq_invlqr_wr} \\
 \begin{bmatrix}
        K^{(r)} \\ I_n
    \end{bmatrix} W^{(r)} = \begin{bmatrix}
        U^{(r=1)} \\ X^{(r=1)}
    \end{bmatrix} Z^{(r)} \, , \label{eq_invlqr_ki} \\
     W^{(r)} \succeq 0 \, . \label{eq_invlqr_w} 
\end{gather}
\end{corollary}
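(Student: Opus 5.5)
The plan is to reproduce, for a fixed feedback gain, the data-based parametrisation of closed-loop systems that \cite[Th. 2]{depersis_lqr_datadriven} uses in the proof of Corollary \ref{cor_recover_p}. The one fact needed from the data of iteration $r=1$ is the exact identity
\begin{align}
\widetilde{X}^{(r=1)} = A X^{(r=1)} + B U^{(r=1)}, \qquad B = \begin{bmatrix} B_1 & \dots & B_{|\mathcal{C}|}\end{bmatrix}.\notag
\end{align}
This holds because every column of $\widetilde{X}^{(r=1)}$ is the derivative of the sampled state under \eqref{eq_mas_dyn1}. Since all input channels are persistently exciting, \cite[Th. 1]{willems_note_excitation} also gives that $\begin{bmatrix} U^{(r=1)} \\ X^{(r=1)} \end{bmatrix}$ has full row rank $m+n$.

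First I will use \eqref{eq_invlqr_ki} to obtain the closed-loop formula. Its bottom block reads $X^{(r=1)} Z^{(r)} = W^{(r)}$, and its top block reads $U^{(r=1)} Z^{(r)} = K^{(r)} W^{(r)}$. Multiplying the data identity on the right by $Z^{(r)}$ gives
\begin{align}
\widetilde{X}^{(r=1)} Z^{(r)} = A W^{(r)} + B K^{(r)} W^{(r)} = (A + BK^{(r)}) W^{(r)}.\notag
\end{align}
If $W^{(r)}$ is invertible, right-multiplying by $(W^{(r)})^{-1}$ yields $A^{(r)}_c = A + \sum_{j} B_j K^{(r)}_j = \widetilde{X}^{(r=1)} Z^{(r)} (W^{(r)})^{-1}$. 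This is the closed-loop matrix that Algorithm \ref{algo2} uses for $A^{(r)}_c$, consistent with the sign convention $A_c = A + \sum_j S_j P_j$ up to the sign in $K_i$.

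Next I will show that $W^{(r)}$ is nonsingular for every feasible point. Constraint \eqref{eq_invlqr_wr} becomes $(A+BK^{(r)})W^{(r)} + W^{(r)}(A+BK^{(r)})^\top \preceq -I_n$. If $W^{(r)}v=0$ for some $v \neq 0$, then $v^\top(\cdot)v = 0 \le -\|v\|^2$, a contradiction. Together with \eqref{eq_invlqr_w} this gives $W^{(r)} \succ 0$. The same Lyapunov inequality also shows that $A^{(r)}_c$ is Hurwitz, which is needed for Corollary \ref{cor_recover_p} to apply downstream.

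Finally I will address feasibility, which I expect to be the main obstacle: the formula is only meaningful if the SDP has a solution. I will argue as follows. If $A+BK^{(r)}$ is Hurwitz, the Lyapunov equation $(A+BK^{(r)})W + W(A+BK^{(r)})^\top = -I_n$ has a solution $W \succ 0$. Because the stacked data matrix has full row rank, there is some $Z$ with $\begin{bmatrix} U^{(r=1)} \\ X^{(r=1)} \end{bmatrix} Z = \begin{bmatrix} K^{(r)} \\ I_n \end{bmatrix} W$, for example via a right inverse. That pair $(W,Z)$ satisfies \eqref{eq_invlqr_wr}--\eqref{eq_invlqr_w}. Since the objective is bounded below by $0$ and the feasible set is closed, I would also check that a minimiser exists, or else note that any feasible point suffices because the formula for $A^{(r)}_c$ does not depend on which feasible point is chosen. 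I will therefore state the result under the standing assumption that $K^{(r)}$ is stabilizing, as the Newton iteration of \cite{Engwerda2007} presumes. I will also note that exact equality with $A^{(r)}_c$ requires the true $B_i$, with $B^{est}_i$ matching them under exact data.
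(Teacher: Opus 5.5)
Your proof is correct and uses the same ingredients as the paper's. These are the data parametrization of \cite[Th.~2]{depersis_lqr_datadriven} ($A+BK^{(r)}=\widetilde{X}^{(r=1)}G_K$ for any $G_K$ satisfying \eqref{eq_th_g}), the substitution $G_K=Z^{(r)}(W^{(r)})^{-1}$, and a Lyapunov inequality for $A_c^{(r)}$. Where you differ is the direction of the argument.

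The paper argues existence. It asserts that $A_c^{(r)}$ is Hurwitz because $K^{(r)}$ satisfies \eqref{eq_nw_approach}. It then gets some $W\succ 0$ satisfying \eqref{eq_abk_w} from the $\mathcal{H}_2$ characterization in \cite{Dullerud_convex_book}, and notes that \eqref{eq_abk_w}--\eqref{eq_th_g} become the SDP constraints. That an arbitrary feasible pair (i.e.\ whatever the solver returns) has invertible $W^{(r)}$ and reproduces $A_c^{(r)}$ is left to the citation.

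You prove exactly that soundness direction from $\widetilde{X}^{(r=1)}=AX^{(r=1)}+BU^{(r=1)}$. You show that \eqref{eq_invlqr_wr} alone forces $W^{(r)}\succ 0$ and $A_c^{(r)}$ Hurwitz, and you obtain feasibility from the Lyapunov equation and a right inverse. Your route makes several things visible that the paper's does not:
\begin{itemize}
\item the trace objective is irrelevant to the formula;
\item full row rank of the stacked data is needed only for feasibility (Willems' lemma delivers it only with controllability of $(A,B)$, which the paper also assumes tacitly);
\item given that rank condition, feasibility is equivalent to $K^{(r)}$ being stabilizing.
\end{itemize}
So your standing assumption is unavoidable. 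It replaces the paper's unsupported step deriving stability from \eqref{eq_nw_approach}.

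The paper's $\mathcal{H}_2$ detour contributes an interpretation of the objective, and it also settles your question about a minimiser. If $W^\star$ solves $A_c^{(r)}W+W(A_c^{(r)})^\top+I_n=0$, then every feasible $W^{(r)}$ satisfies $W^{(r)}\succeq W^\star$. Hence $W^\star$ is the minimiser, and $\mathrm{trace}(W^\star)$ is the squared $\mathcal{H}_2$ norm of $\dot{\vct{x}}=A_c^{(r)}\vct{x}+\vct{u}$, $\vct{y}=\vct{x}$.
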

\begin{proof}
As $K^{(r)}_i$ satisfies \eqref{eq_nw_approach}, $\lambda (A_c^{(r)}) \in \mathbb{C}_-$, where $A^{(r)}_c = A + B K^{(r)}$ and $B = [B_1 \ ... \ B_{|\mathcal{C}|}]$. Moreover, define the system $\hat{F}$ as:
\begin{align}
    \vct{\dot{x}} & = (A + BK^{(r)}) \vct{x} + I_n \vct{u}, \\ 
    \vct{y} & = C \vct{x} \, .
\end{align}
By bounded input-bounded output stability, since $\lambda (A_c^{(r)}) \in \mathbb{C}_-$, then it exists $\epsilon>0$ such that $\|\hat{F}\|_{\mathcal{H}_2} < \epsilon $, where $\mathcal{H}_2$ is the two-to-infinity norm. By \cite[Prop. 6.13]{Dullerud_convex_book}, there exists $W \succ 0$ where $\mathrm{trace}(C W^{(r)} C^\top) < \epsilon$ and
\begin{align}
    (A + B K^{(r)}) W^{(r)} + W^{(r)} (A+BK^{(r)})^\top + I_n \preceq 0 \, . \label{eq_abk_w}
\end{align}
By \cite[Th. 2]{depersis_lqr_datadriven}, $A + B K^{(r)} = \widetilde{X}^{(r)}G_{K}^{(r)}$, where $G_{K}^{(r)}$ satisfies:
\begin{align}
     \begin{bmatrix}
        K^{(r)} \\ I_n
    \end{bmatrix} = \begin{bmatrix}
        U^{(r=1)} \\ X^{(r=1)}
    \end{bmatrix} G_K^{(r)} \, . \label{eq_th_g}
\end{align}
Defining $G^{(r)}_{K} = Z^{(r)} (W^{(r)})^{-1}$, \eqref{eq_abk_w}-\eqref{eq_th_g} become \eqref{eq_invlqr_wr}-\eqref{eq_invlqr_w}. 
\end{proof}

\subsection{Stabilizing output-feedback strategies}
\label{sec_dd_output_feedback}
Using the sets $\mathcal{K}$ and $\mathcal{P}$ that result from Algorithm \ref{algo1}, we propose the following Lemma \ref{lemma_sdp_problem_data} that determines the set $\mathcal{F} = \{F_1, ..., F_{|\mathcal{C}|} \}$ for the output feedback strategies $\vct{u}_i = F_i \vct{y}_{i} \ \forall i \in \mathcal{C}$ for the MAS \eqref{eq_mas_dyn1}-\eqref{eq_mas_dyn2}. Note that Lemma \ref{lemma_sdp_problem_data} does not require knowledge of $A$ and $B_i \ \forall i \in \mathcal{C}$.  
\begin{lemma}
\label{lemma_sdp_problem_data}
Given the sets of matrices $\mathcal{K}=\{K_1, ..., K_{|\mathcal{C}|}\}$ and $\mathcal{P}=\{P_1, ..., P_{|\mathcal{C}|}\}$ that result from either Algorithm \ref{algo1} or Algorithm \ref{algo2}, and the parameters $a$, $b$, and $c \in \mathbb{R}_{>0}$. If the optimal solution of the following SDP:
 \begin{align}
       & \ \ \ \ \ \ \ \ \ \min_{\substack{\alpha_i, \beta_i, \gamma_i, F_i \\ \forall  \ i \in \mathcal{C}}}  \sum _{i \in \mathcal{C}} -a \alpha_i + b \beta_i + c \gamma_i \, , \label{eq_sdp_obj_d} \\
      & \text{subject to: }\notag \\ 
     & \begin{bmatrix}
        \Gamma_i + \Upsilon_i & \bar{K}^{\top}_i & \Omega_{-i}  \\
        \bar{K}_i  & R_i & 0 \\
        \Omega_{-i}^\top & 0 & \bar{R}_{-i}
    \end{bmatrix}  \succeq \alpha_i I_{n+m} \  \ \forall  \ i \in \mathcal{C} \, , \label{eq_lemma1_c_d} \\
    & \begin{bmatrix}
        \beta_i I_{m_i} & -R_i^{\frac{1}{2}}K_i + R_i^{\frac{1}{2}} F_{i} C_{i} \\
        ( -R_i^{\frac{1}{2}}K_i + R_i^{\frac{1}{2}} F_{i} C_{i})^\top & \beta_i I_{n}
    \end{bmatrix} \succeq 0 \notag \\
    &  \  \ \ \ \ \ \ \ \ \ \ \ \ \ \ \ \ \ \ \ \ \ \ \ \ \ \ \ \ \ \ \ \ \ \ \ \ \ \ \ \ \ \ \ \ \ \ \ \ \ \ \ \forall  \ i \in \mathcal{C}, \label{eq_nashapprox_d}\\
     & \begin{bmatrix}
        \gamma_i I_{m_i} & R_i^{\frac{1}{2}} F_i D_i \\
        (R_i^{\frac{1}{2}} F_i D_i)^\top & \gamma_i I_{m_i}
    \end{bmatrix} \succeq 0 \  \ \forall  \ i \in \mathcal{C} \, , \label{eq_atten_d}
 \end{align}
 returns $\alpha_i>0$ for at least one $i \in \mathcal{C}$, then the resulting set of matrices $\mathcal{F} = \{F_1, ..., F_{|\mathcal{C}|} \}$ forms the output feedback control $\vct{u}_i = F_i \vct{y}_i \  \forall i \in \mathcal{C}$ that satisfies the objectives of Problem $2$ in Section \ref{sec_problem2}. In \eqref{eq_lemma1_c_d}-\eqref{eq_atten_d}, the matrices $\Gamma_i \in \mathbb{R}^{n \times n}$, $\Upsilon_i \in \mathbb{R}^{n \times n}$, $\bar{K}_i \in \mathbb{R}^{m_i \times n}$, $\Omega_{-i} \in \mathbb{R}^{n\times m_i(|\mathcal{C}|-1)}$, and $\bar{R}_{-i} \in \mathbb{R}^{m_i(|\mathcal{C}|-1) \times m_i(|\mathcal{C}|-1)}$ are defined as:
\begin{align}
    & \Gamma_i = \notag \\
    & \text{\small $Q_i + \sum_{j \in \mathcal{C}_{-i}} \Big( L_{ij}K_j + K_j^{\top} L^\top_{ij} \Big) -\sum_{j \in \mathcal{C}}  \Big( L_{ij} F_j C_j + C_j^\top F^{\top}_j L^{\top}_{ij}\Big)$}, \label{def_gamma_datadriven}\\
    & \Upsilon_i  = K^\top_i R_i \left(F_i D_i R_i^{-1} + R_i^{-1}D_i^\top F_i^\top \right) R_i K_i \, , \label{def_upsilon_datadriven} \\
    & \bar{K}_i = (I_{m_i} + D_i^\top F_i^\top)R_i K_i \, , \label{eq_def_kibar} \\
    & \Omega_{-i} = [Z_{i1} \ ... \ Z_{i(i-1)} \quad Z_{i(i+1)}  \ ... \ Z_{i|\mathcal{C}|}], \\
    & Z_{ij} = L_{ij} F_j D_j,\\
    & \bar{R}_{-i} =  \mathrm{diag}(R_1, ..., R_{i-1}, R_{i+1}, ..., R_{|\mathcal{C}|}) \, ,
\end{align}
where:
\begin{align}
    L_{ij} = P_i P_j^{-1} K^{\top}_j R_j \ \forall \ i,j \in \mathcal{C} \, . \label{eq_lij_def}
\end{align}
\end{lemma}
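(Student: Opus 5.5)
We would prove Lemma \ref{lemma_sdp_problem_data} by using each Nash value matrix $P_i$ from Algorithm \ref{algo1} or \ref{algo2} as a candidate storage (Lyapunov) function for the closed loop under $\vct{u}_i = F_i\vct{y}_i$. The argument is then a dissipation inequality that, after substituting the CAREs, becomes the LMI \eqref{eq_lemma1_c_d}.

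\emph{Step 1: closed loop.} Substituting $\vct{u}_j = F_jC_j\vct{x} + F_jD_j\vct{d}_j$ into \eqref{eq_mas_dyn1} gives
\begin{align*}
\dot{\vct{x}} = \Big(A+\sum_{j\in\mathcal{C}}B_jF_jC_j\Big)\vct{x} + \sum_{j\in\mathcal{C}} B_jF_jD_j\vct{d}_j .
\end{align*}
For each $i$ take $V_i=\vct{x}^\top P_i\vct{x}$ with $P_i\succeq 0$, and aim to show
\begin{align*}
\dot V_i + \vct{x}^\top Q_i\vct{x} + \vct{u}_i^\top R_i\vct{u}_i \le \vct{d}^\top R\,\vct{d} - \alpha_i\|(\vct{x},\vct{d})\|^2 .
\end{align*}

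\emph{Step 2: eliminate the unknown matrices.} The key identity comes from $K_j = R_j^{-1}B_j^\top P_j$, or equivalently $B_j = P_j^{-1}K_j^\top R_j$ whenever $P_j$ is invertible. This gives $P_iB_j = P_iP_j^{-1}K_j^\top R_j = L_{ij}$, which is \eqref{eq_lij_def}. Every occurrence of $P_iB_j$ in $\dot V_i$ therefore becomes $L_{ij}$. The remaining term $A^\top P_i + P_iA$ is replaced using the CARE $\mathcal{R}_i(\mathcal{P})=0$ from \eqref{eq_coupled_ricky}, which Corollary 2 guarantees for Algorithm \ref{algo1} and the SDP \eqref{eq_nw_sdp_obj}--\eqref{eq_nw_p} at convergence provides for Algorithm \ref{algo2}. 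After this substitution, $A$ and $B_j$ no longer appear, and the expression involves only $Q_i$, $L_{ij}$, $K_j$, $F_j$, $C_j$ and $D_j$.

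\emph{Step 3: assemble the quadratic form.} Collect terms in $(\vct{x},\vct{d}_i,\vct{d}_{-i})$. The $\vct{x}\vct{x}$ block produces $\Gamma_i$ in \eqref{def_gamma_datadriven}, together with the $\Upsilon_i$ correction from \eqref{def_upsilon_datadriven}. The cross terms with $\vct{d}_i$ produce $\bar K_i$ in \eqref{eq_def_kibar}, and the cross terms with $\vct{d}_{-i}$ produce $\Omega_{-i}$ through $Z_{ij}=L_{ij}F_jD_j$. The supply weights give the diagonal blocks $R_i$ and $\bar R_{-i}$. The $\vct{u}_i^\top R_i\vct{u}_i$ term and the mismatch between $K_i$ and $F_iC_i$ are completed into squares; the residual is bounded by the norm terms $\beta_i$ in \eqref{eq_nashapprox_d} and $\gamma_i$ in \eqref{eq_atten_d}. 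The LMI \eqref{eq_lemma1_c_d} with $\alpha_i>0$ then gives a strict dissipation inequality with supply $\vct{d}^\top R\vct{d}$.

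\emph{Step 4: the three objectives of Problem 2.}
\begin{itemize}
\item \emph{Stability.} Set $\vct{d}=0$. Then $\dot V_i \le -\alpha_i\|\vct{x}\|^2$, which yields $\lambda(A+\sum_j B_jF_jC_j)\in\mathbb{C}_-$. This needs only one index $i$ with $\alpha_i>0$, matching the hypothesis of the lemma.
\item \emph{Dissipativity.} This follows by integrating the inequality of Step 1 along trajectories.
\item \emph{Proximity to the Nash cost.} Write $f_i^{\mathcal{F}} - f_i^{\mathrm{Nash}}$ as an integral of quadratic terms in $(F_iC_i-K_i)\vct{x}$ and $F_iD_i\vct{d}_i$. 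Since $\beta_i \ge \|R_i^{1/2}(F_iC_i-K_i)\|$ and $\gamma_i\ge\|R_i^{1/2}F_iD_i\|$, minimizing $b\beta_i + c\gamma_i$ in \eqref{eq_sdp_obj_d} minimizes bounds on this gap.
\end{itemize}

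\emph{Main obstacle.} The hardest part is the bookkeeping in Step 3: showing that the CARE substitution plus completion of squares yields exactly the stated $\Gamma_i$, $\Upsilon_i$ and $\bar K_i$, in particular the factor $(I+D_i^\top F_i^\top)$. A related difficulty is that the map $F_j\mapsto\Gamma_i$ must be affine so that the constraint is a genuine LMI; it is, because $L_{ij}$ is fixed data. A further technical gap is the invertibility of $P_j$. We would assume it; if $Q_j\succ0$, it follows from the CARE.

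We would proceed as follows. First fix the algebra with $\vct{d}=0$ to get stability. Then add the disturbance terms block by block and apply a Schur complement to obtain the matrix in \eqref{eq_lemma1_c_d}.
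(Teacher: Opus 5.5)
\textbf{Verdict: the proposal has a genuine gap.} Your Step 2 matches the paper's proof, but your re-derivation of the cited model-based lemma (Steps 1, 3 and 4) aims at an inequality that the LMI does not encode and that is false in general.

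\textbf{What matches the paper.} Your Step 2 is essentially the paper's entire proof. The paper checks block by block that the data-based matrices coincide with those of the model-based LMI of Lemma 3 in \cite{nash_outputfeedback}. It uses $P_jB_j=-K_j^\top R_j$ and $L_{ij}=P_iB_j$, plus a sign-flip congruence for the $\bar K_i$ block. It then inherits stability, dissipativity and the cost heuristic from that lemma.

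\textbf{Where your argument fails.} The inequality you target in Step 1 is
\[
\dot V_i+\vct{x}^\top Q_i\vct{x}+\vct{u}_i^\top R_i\vct{u}_i\le\vct{d}^\top R\vct{d}-\alpha_i\|(\vct{x},\vct{d})\|^2 .
\]
The LMI \eqref{eq_lemma1_c_d} is affine in the $F_j$, and eliminating its $\vct{d}$-blocks only produces terms of the form $F_jD_j(\cdot)D_j^\top F_j^\top$. Nothing in it can generate $C_i^\top F_i^\top R_iF_iC_i$. Also, $\beta_i$ and $\gamma_i$ do not appear in \eqref{eq_lemma1_c_d}, so they cannot absorb a residual there.

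Here is a counterexample. Use $K_j=-R_j^{-1}B_j^\top P_j$ and $L_{ij}=P_iB_j$, the convention of \eqref{eq_coupled_ricky} and of the paper's proof. Take $C_j=I$, $F_j=K_j$, $D_j=0$ and $Q_i\succ0$.
\begin{itemize}
\item By definition $\Upsilon_i=0$, $\Omega_{-i}=0$, $\bar K_i=R_iK_i$ and $\Gamma_i=Q_i+2K_i^\top R_iK_i$.
\item The Schur complement of the $R_i$ block is $Q_i+K_i^\top R_iK_i\succ0$, so \eqref{eq_lemma1_c_d} holds with some $\alpha_i>0$.
\item Yet along this Nash closed loop the CARE gives $\dot V_i+\vct{x}^\top Q_i\vct{x}+\vct{u}_i^\top R_i\vct{u}_i\equiv0$, contradicting your target.
\end{itemize}

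Your stability step fails for the same reason. Setting $\vct{d}=0$ in \eqref{eq_lemma1_c_d} gives only $\Gamma_i+\Upsilon_i\succeq\alpha_iI$. With $A_{cl}=A+\sum_jB_jF_jC_j$, the CARE gives $\Gamma_i=-(A_{cl}^\top P_i+P_iA_{cl})+K_i^\top R_iK_i$. So that inequality does not control the sign of $A_{cl}^\top P_i+P_iA_{cl}$.

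\textbf{What the LMI actually encodes.} It is a dissipation inequality with no running cost. One checks that
\[
\bar K_i^\top R_i^{-1}\bar K_i=K_i^\top R_iK_i+\Upsilon_i+Z_{ii}R_i^{-1}Z_{ii}^\top .
\]
Eliminating the $\mathrm{diag}(R_i,\bar R_{-i})$ block of \eqref{eq_lemma1_c_d} by a Schur complement then yields
\[
A_{cl}^\top P_i+P_iA_{cl}+\sum\nolimits_{j\in\mathcal{C}}Z_{ij}R_j^{-1}Z_{ij}^\top\preceq-\alpha_iI_n .
\]
\begin{itemize}
\item The terms $K_i^\top R_iK_i$ and $\Upsilon_i$ cancel. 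That cancellation is the purpose of $\Upsilon_i$ and of the factor $(I+D_i^\top F_i^\top)$; they do not come from completing a square with $\vct{u}_i^\top R_i\vct{u}_i$.
\item The disturbance enters $P_i\dot{\vct{x}}$ as $\sum_jZ_{ij}\vct{d}_j$. Completing squares gives $\dot V_i\le\vct{d}^\top R\vct{d}-\alpha_i\|\vct{x}\|^2$, with storage $\vct{x}^\top P_i\vct{x}$ and supply $\vct{d}^\top R\vct{d}$.
\item $Q_i$ enters $\Gamma_i$ only as a stand-in for $-(A^\top P_i+P_iA)$ via the CARE.
\item Stability and dissipativity follow from the single index with $\alpha_i>0$.
\item $P_i\succ0$ is automatic: $P_iv=0$ would force $v^\top(A_{cl}^\top P_i+P_iA_{cl})v=0$, contradicting the strict inequality.
\end{itemize}

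\textbf{The cost-proximity claim.} The constraints \eqref{eq_nashapprox_d}--\eqref{eq_atten_d} play no role in the above. They are only the proxies $\|R_i^{1/2}(F_iC_i-K_i)\|\le\beta_i$ and $\|R_i^{1/2}F_iD_i\|\le\gamma_i$. The true gap $f_i^{\mathcal F}-f_i^{\mathrm{Nash}}$ also contains the cross terms $2\vct{x}^\top L_{ij}(F_jC_j-K_j)\vct{x}$ for $j\ne i$. So the ``minimized'' claim is heuristic, in your route and in the paper's.

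\textbf{Signs.} With your convention $K_j=R_j^{-1}B_j^\top P_j$, you recover \eqref{eq_lij_def}, but the $\mathcal{C}_{-i}$-sum in \eqref{def_gamma_datadriven} flips sign. With the convention consistent with \eqref{eq_coupled_ricky}, $\Gamma_i$ comes out as printed, but then $L_{ij}=P_iB_j=-P_iP_j^{-1}K_j^\top R_j$. The paper's passage from \eqref{eq_pb} to \eqref{eq_lij_def} has the same slip.
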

\begin{proof}
    In Lemma 3 from \cite{nash_outputfeedback}, we have proposed a model-based SDP to synthesize the set $\mathcal{F}$. In this proof, we reformulate the SDP such that it only requires the sets $\mathcal{K}$ and $\mathcal{P}$ from Algorithm \ref{algo1}. First, using $K_j = - R^{-1}_jB^\top_j P_j \ \forall K_j \in \mathcal{K}, j \in \mathcal{C}$ \cite{capello_nash_approx, Engwerda_LQbook}, we obtain:
    \begin{align}
        P_j B_j  = - K^\top_j R_j \, . \label{eq_pb}
    \end{align}
    Using \eqref{eq_pb} and defining $L_{ij} =  P_i B_j$ results in \eqref{eq_lij_def}.
    In Lemma 3 from \cite{nash_outputfeedback}, we define:
    \begin{align}
        \Gamma_i = Q_{i} - P_{i} \Big( \sum_{j \in \mathcal{C}_{-i} }  B_jR_j^{-1}B_j^\top P_j + \sum_{ j \in \mathcal{C}}  B_jF_jC_j \Big)^\top \notag \\
    - \Big( \sum_{j \in \mathcal{C}_{-i} }  P_jB_jR_j^{-1} B_j^\top + \sum_{ j \in \mathcal{C}}  C_j^\top F_j^\top B_j^\top \Big) P_i \, . \label{eq_sdp1_prev}
    \end{align}
    Using \eqref{eq_lij_def} in \eqref{eq_sdp1_prev} results in \eqref{def_gamma_datadriven}. Similarly, in Lemma 3 from \cite{nash_outputfeedback}, it is defined that:
    \begin{align}
        \Upsilon_i = P_i B_i \left(F_i D_i R_i^{-1} + R_i^{-1}D_i^\top F_i^\top \right) B_i^\top P_i \, .\label{eq_sdp2_prev}
    \end{align}
    Using \eqref{eq_pb} in \eqref{eq_sdp2_prev} results in \eqref{def_upsilon_datadriven}. In Lemma 3 in \cite{nash_outputfeedback}, the first-row second-column matrix entry in \eqref{eq_lemma1_c_d} is defined as $P_i B_i(I_{m_i} + F_i D_i)$. Using \eqref{eq_def_kibar} and \eqref{eq_pb} results in $-\bar{K}_i^{\top} = P_i B_i(I_{m_i} + F_i D_i)$. The negative sign is removed in \eqref{eq_lemma1_c_d} because:
    \begin{align}
        \begin{bmatrix}
            A & B \\
            B^\top & C
        \end{bmatrix} \succeq 0 \text{ if and only if } \begin{bmatrix}
            A & -B \\
            -B^\top & C
        \end{bmatrix} \succeq 0 \, .
    \end{align}  
    In Lemma 3 from \cite{nash_outputfeedback}, the first-row third-column matrix entry in \eqref{eq_lemma1_c_d} is defined as $P_i\bar{D}_{-i}$, where: 
    \begin{align}
    & \text{\small $\bar{D}_{-i} =$} \notag \\
    & \text{\small $[B_1F_1D_1 \ ... \ B_{i-1}F_{i-1}D_{i-1} \ \ B_{i+1}F_{i+1}D_{i+1} \ ... \ B_{|\mathcal{C}|}F_{|\mathcal{C}|}D_{|\mathcal{C}|}]$} \, .
    \end{align}
    Using \eqref{eq_lij_def} results in $P_i\bar{D}_{-i} = \Omega_{-i}$. In Lemma 3 from \cite{nash_outputfeedback},  first-row second-column matrix entry in the constraint \eqref{eq_nashapprox_d} is defined as $R_i^{-\frac{1}{2}}B_i^\top P_i + R_i^{\frac{1}{2}} F_{i} C_{i}$, which by \eqref{eq_pb}, results in \eqref{eq_nashapprox_d}. Finally,  constraint \eqref{eq_atten_d} is identical to the corresponding constraint in Lemma 3 from \cite{nash_outputfeedback}. 
\end{proof}

\section{Case studies}
\label{sec_case_studies}
\subsection{Computational implementation}
We implement the \textcolor{black}{proposed framework in CMAS \cite{cmas_repo}. CMAS is an open-source toolbox in Python and Matlab for the control synthesis for MAS. We release the first version of CMAS in our prior work \cite{nash_outputfeedback}, implementing model-based control. Through this work, we release a second version of CMAS, implementing the data-driven framework proposed in this work.}     

\subsection{Numerical cases}
Consider the unstable fourth-order LTI: $\vct{\dot{x}} = A \vct{x} + \sum_{i \in {1,2}} B_i \vct{u}_i$, where $A$ and $B =[B_1 \ B_2]$ are given in the illustrative example in \cite{depersis_lqr_datadriven}. First, $\mathcal{K}$ and $\mathcal{F}$ are computed using \cite{nash_outputfeedback}, which provides model-based methods that explicitly require $A$ and $B$ as input data. We contrast these static controllers with those obtained using  Algorithm \ref{algo1}, \ref{algo2} and Lemma \ref{lemma_sdp_problem_data}. In the application of these algorithms, we generate random square signals that are applied to the input channels and use samples of the resulting state trajectories. In Table \ref{table_model_vs_dd}, we define the error as:
\begin{equation}
    \text{Error} = 100 \times \max \Big( |{(K^{\text{data-based}}_i }\oslash{K^{\text{model-based}}_i}) -1 |\Big),
\end{equation}
where $\max(A)$ is the maximum element of the matrix $A$, and $\oslash $ denotes element-wise division. Table \ref{table_model_vs_dd} shows that the error is less than 6\% across $\mathcal{K}$ and $\mathcal{P}$, demonstrating that our framework computes almost identical controllers to those obtained by model-based methods.

Furthermore, we generate ten random LTI systems and for each of these systems, we compare the convergence speed of Algorithm \ref{algo1} that uses parallel for-loop against the implementation in \cite[Algo. 2]{nortmann_nash_iterative} that uses traditional for-loop and synchronous updates. We see that our proposed implementation of Algorithm \ref{algo1} with parallel for-loop is on average $19 \%$ faster. 

\begin{table}[t]
\caption{Comparison of resulting controllers using the proposed framework and model-based methods \cite{nash_outputfeedback}.}
\begin{tabular}{ccccccc}
\toprule
  Data-based methods $\rightarrow$    & \multicolumn{2}{c}{Algorithm \ref{algo1}} & \multicolumn{2}{c}{Algorithm \ref{algo2}} & \multicolumn{2}{c}{Lemma \ref{lemma_sdp_problem_data}} \\
      & $K_1$           & $K_2$           & $K_1$           & $K_2$           & $F_1$        & $F_2$        \\ \cmidrule{2-7}
\begin{tabular}{c}
     Error ($\%$) relative to \\
     model-based methods.
\end{tabular}  & 0.62           & 0.03              & 1.27           & 0.37           & 5.99        & 0.06      \\
\bottomrule
\end{tabular} 
\label{table_model_vs_dd}
\end{table}


\begin{table}[t]
\centering
\caption{Comparison of performance between the proposed control framework and other methods}
\begin{tabular}{cccc}
\toprule
Case                                                                           & Control      & Overshoot$^{1}$ & Settling time$^{2}$ \\ 
\toprule
\multirow{4}{*}{\begin{tabular}[c]{@{}c@{}}Frequency \\ regulation \end{tabular}} & Proposed     & $0.0075$ Hz    & $0.568$ s        \\ \cmidrule{2-4} 
        & $k_i=40$   & $0.0344$ Hz   & $0.6392$ s     \\ \cmidrule{2-4} 
        & $k_i=80$     & $0.0318$ Hz   & $0.6802$ s       \\ \cmidrule{2-4} 
        & $k_i=120$   & $0.0302$ Hz   & $0.6377$ s       \\ \midrule
\multirow{3}{*}{\begin{tabular}[c]{@{}c@{}}Voltage\\ regulation \end{tabular}}  & Proposed     & $0.0098$ p.u.    & $0.0311$ s        \\ \cmidrule{2-4} 
        & Proportional-integral         & $0.0285$ p.u.    & $0.0427$ s        \\ \cmidrule{2-4} 
       & No control & $0.0383$ p.u.    & $0.0445$ s     \\  
       \bottomrule \\
       \multicolumn{4}{l}{\scriptsize{1, 2: Average of the overshoots or settling time for all disturbances and GFMI buses.}} \\
       \multicolumn{4}{l}
       {\scriptsize{1: Maximum value attained when disturbance is applied. }} \\
       \multicolumn{4}{l}
       {\scriptsize{2: Seconds until the absolute value of the derivative stayed below $0.01$ Hz/s}} \\
       \multicolumn{4}{l}{\scriptsize{and $0.1$ p.u./s.}}   
\end{tabular}
\label{table_performance}
\end{table}

\subsection{Frequency regulation}
\label{sec_freq_reg}
This case study develops a control scheme to regulate the frequency in a modified version of the IEEE 230-kV 9-bus system. We design an 94th-order electromagnetic transient (EMT) model in Simulink that consists of three grid-forming inverters (GFMIs), transmission lines and loads, shown in Fig. \ref{fig_nine_bus_system}. Each GFMI is a 13th-order model that includes a single-loop voltage controller, a virtual-inertia-based active power controller, a droop control for reactive power, a constant DC power supply, a pulse-width modulator (PWM), an average voltage source convert, and a LCL filter. Each transmission line has a $\pi$ structure which has a series RL branch and two parallel RC shunts. The loads are resistive shunts. Refer to \cite{sting_paper} for the initialization of the EMT scenario and a more detailed description of the dynamics of the components in this system. 

\begin{figure}[htb]
      \centering
      \includegraphics[width=\linewidth]{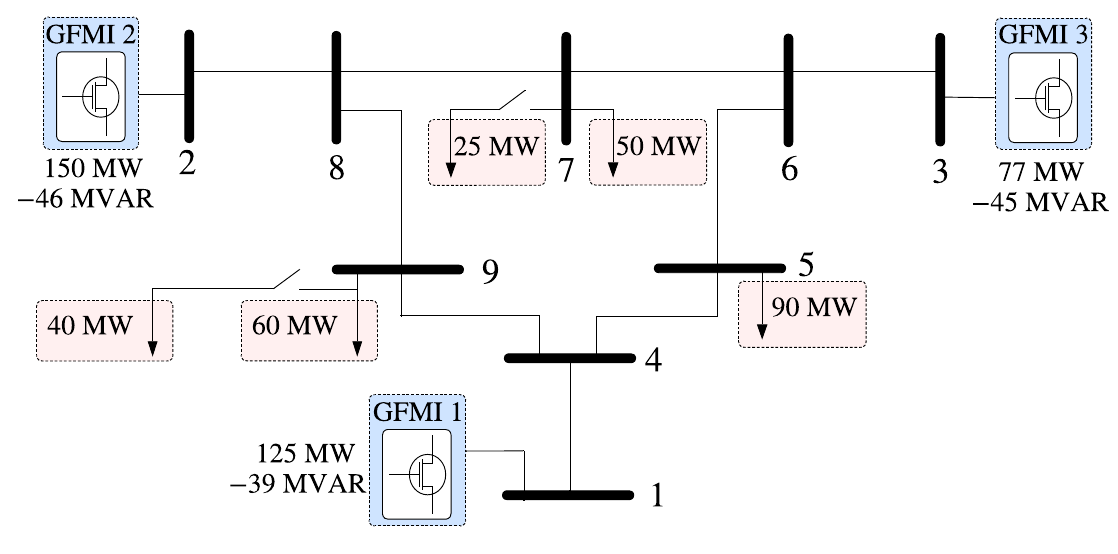} 
      \caption{Modified IEEE $230$-kV $9$-bus system.} \label{fig_nine_bus_system}
\end{figure}

Note that the EMT scenario captures the dynamics of voltage, current and frequency throughout the grid. To synthesize the frequency regulation scheme, we consider that the unknown state-space model is a multi-nodal frequency dynamics model that has $6$ states and $3$ input channels. This model reflects the swing dynamics at each GFMI bus and the DC power flow equation in the branches. The states measured from the EMT scenario are the frequencies at the GFMI buses, and the voltage phases at GFMI $2$ and $3$ with respect to the GFMI $1$. The input channels are the three GFMI's active power references. Moreover, the agents in this MAS are the three GFMIs, i.e., $|\mathcal{C}| = 3$. By defining these agents, the unknown frequency dynamics model becomes a MAS \cite{paulpatricia_nash_freq}.

\begin{figure}[t]
      \centering
      \includegraphics[width=\linewidth]{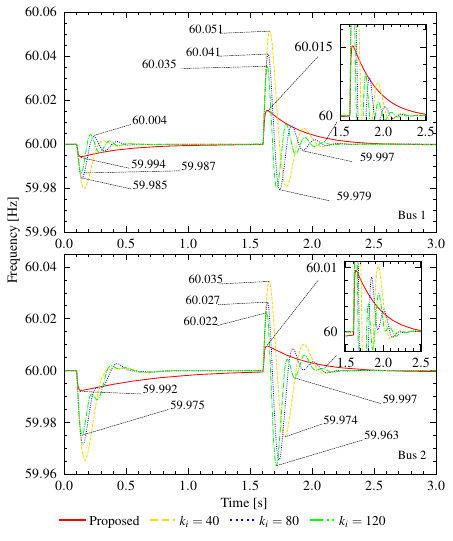} 
      \caption{Time-domain performances of  frequency regulation that result from using the proposed control framework and integral action strategies for the grid in Fig. \ref{fig_nine_bus_system}.} 
      \label{fig_freq_grid}
\end{figure}

First, we excite the power reference input of each GFMI with random square signals in the EMT scenario. We have found that these state trajectories from the EMT simulation contain numerical noise, which can make the SDP \eqref{eq_lqr_sdp_obj}-\eqref{eq_lqr_sdp_t} infeasible or resulting in zero state-feedback matrices. These are issues reported in \cite{zeng2024noisesensitivity}. In a real life setting, state trajectories are also corrupted with noise from sensors.  To circumvent these issues, we propose to use system identification to learn a state-space model using the input and state trajectories from the EMT simulation. In this manner, exciting the identified model with random signals results in state trajectories without noise. Input and state trajectories are used in the Nash seeking iterative LQR (Algorithm \ref{algo1}). Using this algorithm, we obtain the sets $\mathcal{K}$ and $\mathcal{P}$. Then we consider that each GFMI has access to only the frequencies of the other GFMIs. Voltage phases are not accessible. This information allows to build $C_i$ $\forall i \in \mathcal{C}$. Additionally, we add an arbitrary perturbation matrix $D_i = (0.01) \mathbf{1}$ $\forall i \in \mathcal{C}$, where $\mathbf{1}$ is a matrix where all entries are equal to one. Then, $\mathcal{K}$, $\mathcal{P}$, $C_i$ and $D_i \ \forall i \in \mathcal{C}$ are used as inputs to the SDP \eqref{eq_sdp_obj_d}-\eqref{eq_atten_d}, which ultimately synthesizes the set $\mathcal{F}$ for the GFMIs. Note that $\mathcal{F}$ contains the set of static output feedback controllers that steers the frequency back to its nominal value in the presence of load perturbations. Finally, we implement the controllers in the EMT scenario with load perturbations. 

Figure \ref{fig_freq_grid} compares the time-domain performances of the synthesized $\mathcal{F}$ and other strategies. These strategies are integral controllers with gain $k_i$ that receive the frequency deviation and act on the power reference input of each GFMI. As Fig. \ref{fig_freq_grid} and Table \ref{table_performance} show, the data-driven control scheme notably outperforms the other control schemes. It can attain up to $78\%$ less overshoot and up to $16\%$ less setting time.

\subsection{Voltage regulation}
\label{sec_voltage_reg}
This case study develops a control scheme to regulate voltage in a 20-kV two-bus system. We have designed a EMT scenario in Simulink that consists of a grid following inverter (GFLI), a GFMI, a long transmission line, and loads, as it is shown in Fig. \ref{fig_two_bus_system}. The GFLI is composed by a $dq-$based frame proportional-integral (PI) current controller, a L filter, a PWM, and an average voltage source converter. The GFMI is composed by a single-loop voltage controller, L filter, PWM, and average source converter. The GFMI generates a voltage at a fixed frequency (60 Hz), and shares its frequency signal with the GFLI. As in Section \ref{sec_freq_reg}, the transmission line is modeled as a $\pi$ structure.
\begin{figure}[t]
      \centering      \includegraphics[width=\linewidth]{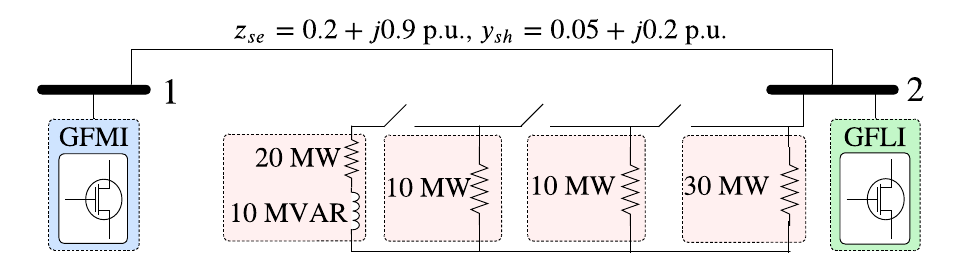} 
      \caption{$20$-kV two-bus system. The parameters $z_{se}$ and $y_{sh}$ are the series impedance and one of two shunt impedances, respectively, of the $\pi-$structure of the line. The base power is $100$ MVA.}
      \label{fig_two_bus_system}
\end{figure}

Consider that the unknown model is the $dq-$based frame small-signal model of the system, which has $13$ states and $3$ input channels. The agents in this system are the GFLI and the GFMI. This model reflects the EMT dynamics of all the grid components, i.e., GFMI, GFLI, transmission line and loads. The GFLI's states are: the components $d$ and $q$ of the state in the integral controller of the current controller, and the components $d$ and $q$ of L filter current. The GFMI's states are: the integral controller in the voltage magnitude controller, and the components $d$ and $q$ of the L filter current. Additionally, we consider the components $d$ and $q$ of the voltage of each bus, and the components $d$ and $q$ of the current in the RL branch of the transmission line. The input channels are: the GFMI's voltage reference, and GFLI's $d$ and $q$ components of the reference current. The process to determine the controllers for the agents is the same as the one described in Section \ref{sec_freq_reg}, except that this case uses Algorithm \ref{algo2} instead of Algorithm \ref{algo1}.

\begin{figure}[t]
      \centering
      \includegraphics[width=\linewidth]{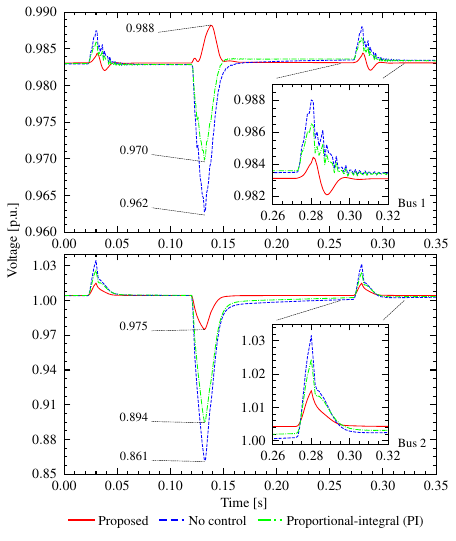} 
      \caption{Time-domain performances of voltage regulation that result from using the proposed control framework, no control, and PI control for the grid in Fig. \ref{fig_two_bus_system}. } 
      \label{fig_volt_grid}
\end{figure}

As Fig. \ref{fig_volt_grid} shows, the performance of the proposed control scheme is compared to a system with no control and a system with PI control. Note that the PI controller is configured through trial and error to obtain a close-to-zero steady state error and no instability.  As it is shown in Fig. \ref{fig_volt_grid} and Table \ref{table_performance}, the proposed control scheme can attain up to $74 \%$ less overshoot and up to $30 \%$ less settling time than the other strategies. 

\section{Challenges for data-driven framework implementation}
Subsections~\ref{sec_freq_reg} and \ref{sec_voltage_reg} illustrate the implementation of the proposed data-driven framework in realistic MAS. Through this implementation, we learn that:
\begin{enumerate}
    \item When exciting the actual nonlinear MAS to obtain data samples of state trajectories, they inevitably carry some amount of noise and numerical error. For instance, a state trajectory can report values in the range of $10^{-5}$ to $10^{-6}$, which can be effectively considered as zero values. However, when Algorithms~\ref{algo1} or \ref{algo2} use these corrupted data samples, they yield zero-valued matrices $\mathcal{P}$. As a result, the synthesis of output-feedback strategies fails. To overcome this issue, we may use noise-resilient SDPs, similar to \cite{depersis_lqr_datadriven, depersis_robust_lqr, dai_output_datadriven}, to obtain the NE using noisy data directly. However, even such noise-resilient SDPs can indeed compute zero-valued controllers, as mentioned  
    in \cite{zeng2024noisesensitivity}.
    \item An alternative choice to noise-resilient SDPs is to clean the data samples using a separate method from the proposed data-driven framework. Developing such method is beyond the scope of this work, instead, we use system identification in the case studies in Subsections~\ref{sec_freq_reg} and \ref{sec_voltage_reg}. Under this approach, we identify a non-linear or linear model from the realistic MAS. Off-the-shelf system identification tools are robust to noisy data. Next, we can run either Algorithm ~\ref{algo1} or \ref{algo2} using data samples generated from the identified MAS model. Next we input the NE solution into the SDP in \eqref{eq_sdp_obj_d}--\eqref{eq_atten_d} to ultimately synthesize the output-feedback strategies. We reiterate that other approaches, instead of system identification, can be used to generate clean data samples of state trajectories.
    \item Algorithm \ref{algo2} can be faster and less computationally expensive than Algorithm~\ref{algo1}. This is because Algorithm \ref{algo1} requires exciting the system and obtaining data samples in each iteration. In contrast, Algorithm \ref{algo2} requires data samples only once. 
    \item It is advisable to consider that the actual MAS is too complex; instead, a low-order version of the MAS can be better suited for the data-driven framework. As we experience in our internal tests, the proposed data-driven framework can fail if we aim to capture the dynamics of all the states and consider all input channels of the MAS. In turn, we can assume that the data-driven framework is applied to a reduced version of the MAS. For instance, in Subsection~\ref{sec_freq_reg}, the actual power system has 94 states, including states of voltage, current, and electrical frequency, and 9 input channels in the generators. However, as the target is to regulate the electrical frequency, we assume that the unknown model for data-driven control is a model of frequency dynamics. Using this knowledge, we can estimate that the unknown MAS intended to be used for data-driven control has only 6 states and 3 input channels. Next, by implementing the synthesized control in the high-fidelity non-linear model of the MAS, we can experimentally validate our control scheme. 
\end{enumerate}

\section{Conclusions}
Through this work, we have introduced a novel framework that uses offline data samples of input and state trajectories to ultimately synthesize output feedback strategies for LTI MAS. The framework is a two-step process. The first step consists of determining a NE solution. To this end, we have proposed two alternative algorithms: a Nash seeking iterative LQR and a Nash seeking Newton-Raphson method. The first algorithm uses only data samples of the input and state trajectories, without requiring to identify the matrices $A$ and $B_i \ \forall i \in \mathcal{C}$. In contrast, the second algorithm requires the identification of $B_i \ \forall i \in \mathcal{C}$; however, it does not require to have $\begin{bmatrix}
    A & B_i
\end{bmatrix}$ stabilizable for every $i \in \mathcal{C}$ while the first algorithm does. The second step of the framework consists of using the NE solution in a SDP, which has been built upon our prior work\cite{nash_outputfeedback}. Virtues of our framework include weaker controllability assumptions, the absence of online information, and data-driven estimation of states. We use a high fidelity non-linear MAS in power systems (EMT) to test our framework. Results show successful grid stabilization and provision of regulation services from the inverters in the grid using our proposed framework. It can attain up to 78\% less overshoot and 30\% faster settling time than traditional strategies such as proportional-integral control.

Finally, we discuss challenges encountered while implementing our proposed data-driven framework in realistic settings. Prior work usually tests their frameworks in numerical small-scale examples where the state-space model is known. However, these examples can overlook the barriers to implement the proposed data-driven framework in real-world MAS.

\section*{References}

\bibliography{bibliography.bib}

\begin{IEEEbiography}[{\includegraphics[width=1in,height=1.25in,clip,keepaspectratio]{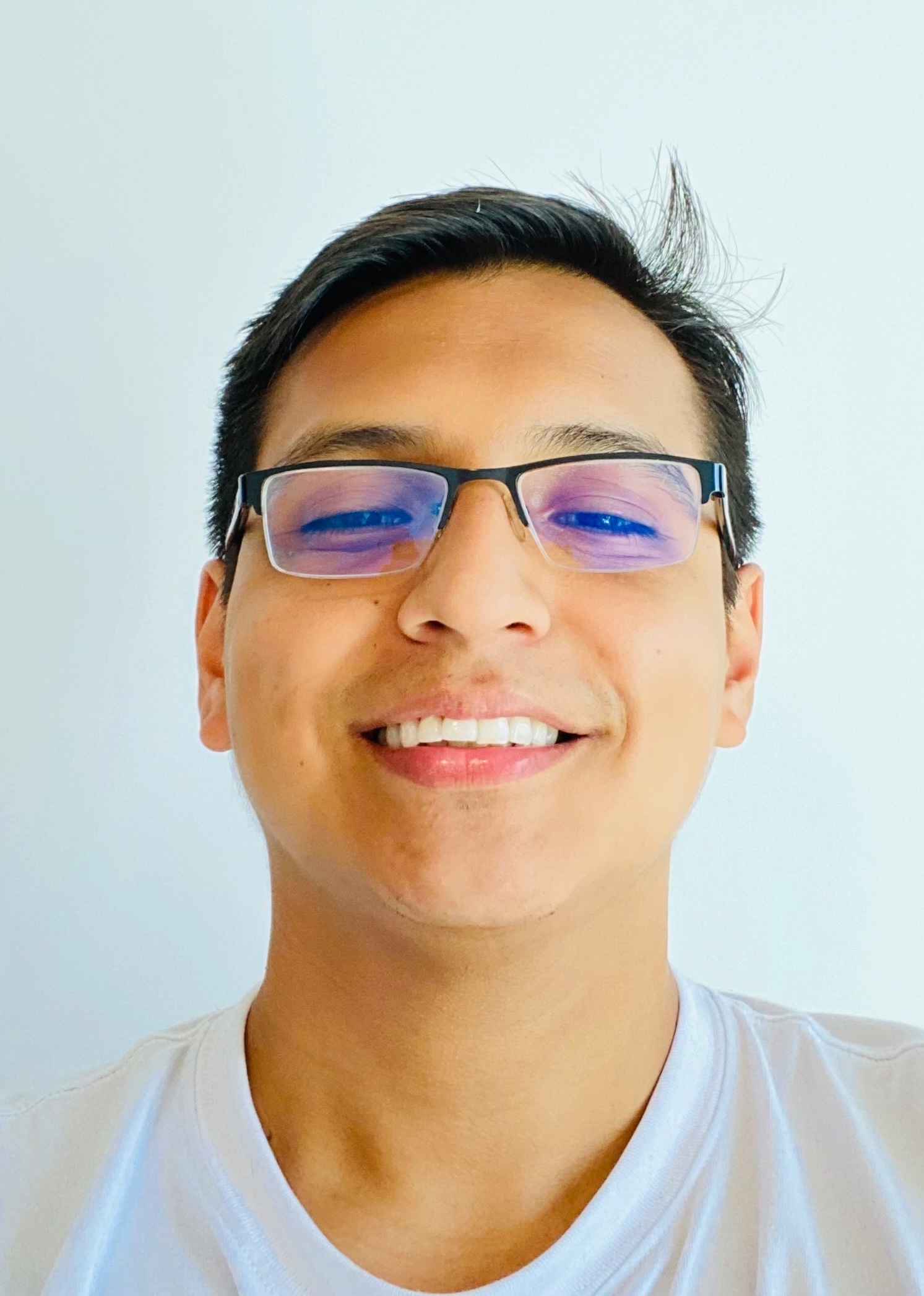}}]{Paul Serna-Torre}
received the B.Sc. degree in electrical engineering from Universidad Nacional de Ingenieria, Peru and the M.A. degree in Applied Mathematics from University of California San Diego, USA. He is currently working towards the Ph.D. degree at University of California San Diego, USA, with the supervision of professor Patricia Hidalgo-Gonzalez. His research interests include control of power system dynamics, and capacity expansion planning.
\end{IEEEbiography}

\begin{IEEEbiography}[{\includegraphics[width=1in,height=1.25in,clip,keepaspectratio]{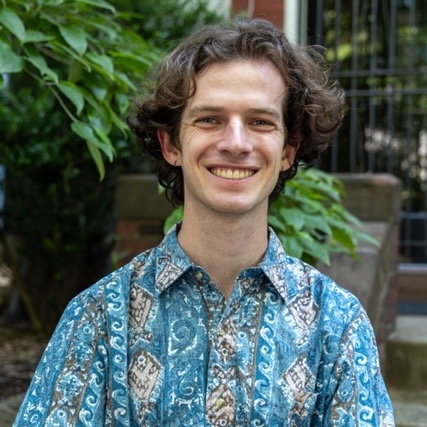}}]{Adam Sedlak}
received his B.A. degree in data science from University of California, Berkeley. He is currently working towards a Ph.D. degree in Mechanical and Aerospace Engineering at University of California San Diego, under the supervision of professor Patricia Hidalgo-Gonzalez. His research interests include power system dynamics and control, model reduction, as well as energy system optimization models.
\end{IEEEbiography}

\begin{IEEEbiography}[{\includegraphics[width=1in,height=1.25in,clip,keepaspectratio]{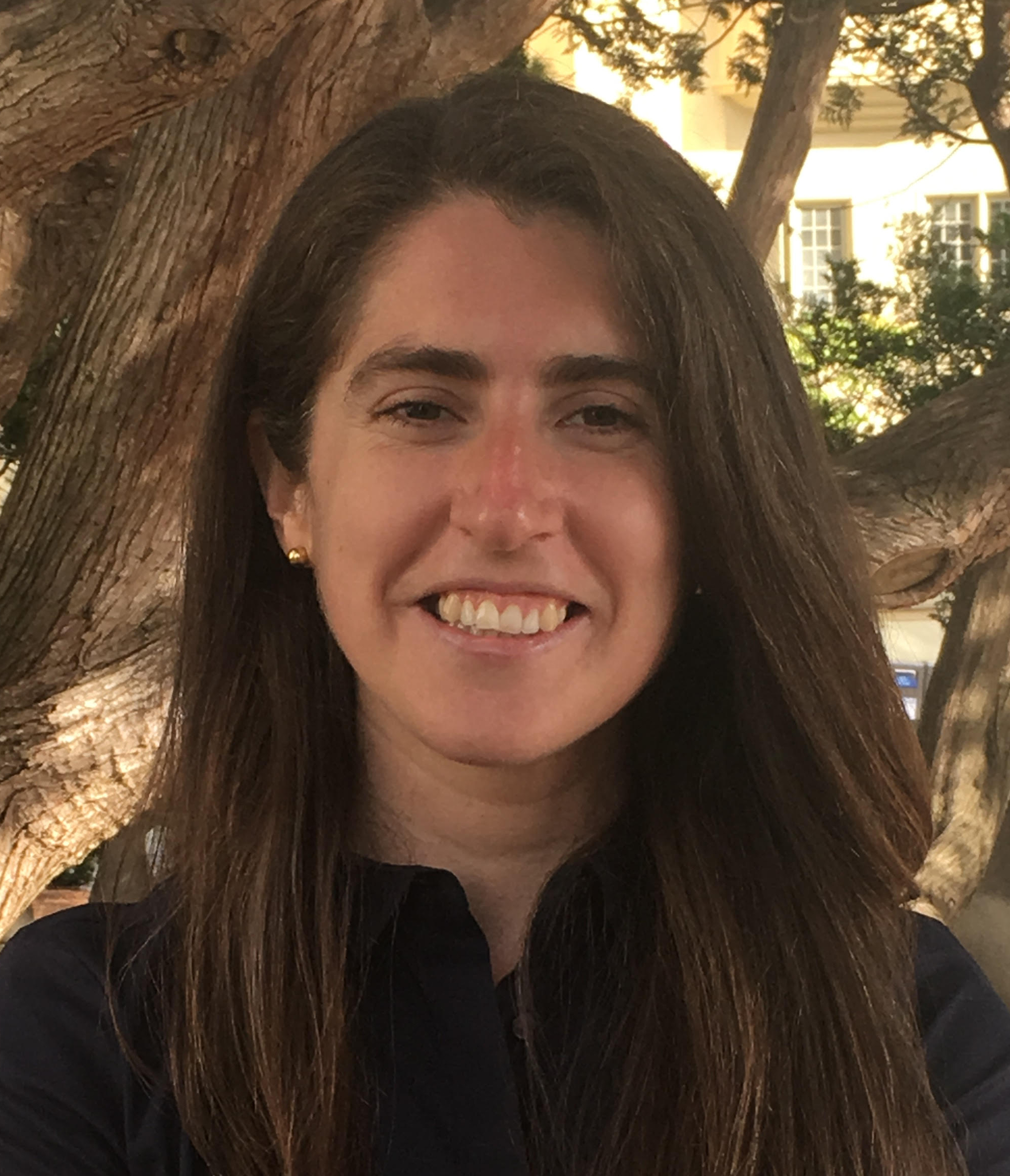}}]{Patricia Hidalgo-Gonzalez}
(Member, IEEE) received the B.Sc. degree in industrial and electrical engineering from Pontificia Universidad Catolica of Chile, and two M.Sc. degrees from the University of California, Berkeley in Energy and Resources and Electrical Engineering and Computer Sciences. She is an associate professor at University of California San Diego. Her research work focuses high penetration of renewable energy using optimization, control theory and machine learning. Prof. Hidalgo-Gonzalez is the 2025 Institute of Industrial and Systems Engineers Energy Systems Division Outstanding Young Investigator Awardee, an NSF GRFP fellow, Siebel Scholar in Energy, Rising Star in Electrical Engineering and Computer Science, received the Outstanding Graduate Student Instructor Award at UC Berkeley (for teaching Convex Optimization), has been awarded Best paper at the Power Systems Computation Conference 2020, and is part of the Editors’ Highlights in Nature Communications (best 50 papers recently published in “Engineering and Infrastructure”), her work was cited in The White House 2024 Economic Report of the President, among other recognitions. She is part of the IEEE Task forces ''Data-driven controls for distributed systems'' and ''Wildfire Aware-Risk Management (WARM) for the Electric Power Grid'' and has served as best paper judge in the IEEE Control Systems Society Technical Committee on Energy Systems. She has also served as the opening talk for the energy storage session in EPRI's 43rd Seminar on Resource Planning in Washington D.C., and as a speaker at a symposium from the U.S. National Academy of Sciences and the German National Academy of Sciences Leopoldina in Halle, Germany, among others.
\end{IEEEbiography}

\end{document}